\documentclass[11pt, final]{article}
\usepackage{a4}
\usepackage{amsmath}%
\usepackage{amstext}%
\usepackage{amssymb}%
\usepackage{showkeys}%
\usepackage{epsfig}%
\usepackage{cite}
\usepackage{tikz}
\usepackage{subfig}
\usepackage{caption}
\usepackage{multirow}
\usepackage{booktabs}
\usepackage{floatrow}

\usepackage{listings}

\setcounter{MaxMatrixCols}{10}

\newtheorem{theorem}{Theorem}

\newtheorem{axiom}{Axiom}

\newtheorem{corollary}[theorem]{Corollary}

\newtheorem{definition}[axiom]{Definition}

\newtheorem{lemma}[theorem]{Lemma}

\newenvironment{remark}{\rem\rm}{\endrem}

\newcounter{unnumber}

\newenvironment{proof}{\prf\rm}{\hfill{$\blacksquare$}\endprf}
\newcommand{\R}{\mathbb{R}}%
\newcommand{\N}{\mathbb{N}}%
\newcommand{\ol}{\overline}%
\newcommand{\ul}{\underline}%

\renewcommand{\>}{\right\rangle}
\newcommand{\<}{\left\langle}

\DeclareMathOperator*\dom{dom}%
\DeclareMathOperator*\B{\overline{\R}}%
\DeclareMathOperator*\argmin{argmin}

\DeclareMathOperator*\crit{crit}
\DeclareMathOperator*\dist{dist}

\textwidth15.1cm \textheight22.5cm

\title{An inertial Tseng's type proximal algorithm for nonsmooth and nonconvex optimization problems}

\author{Radu Ioan Bo\c{t} \thanks{University of Vienna, Faculty of Mathematics, Oskar-Morgenstern-Platz 1, A-1090 Vienna, Austria,
email: radu.bot@univie.ac.at. Research partially supported by DFG (German Research Foundation), project BO 2516/4-1.} \and
Ern\"{o} Robert Csetnek \thanks {University of Vienna, Faculty of Mathematics, Oskar-Morgenstern-Platz 1, A-1090 Vienna, Austria,
email: ernoe.robert.csetnek@univie.ac.at. Research supported by DFG (German Research Foundation), project BO 2516/4-1.}}

\begin{document}
\maketitle

\noindent \textbf{Abstract.} We investigate the convergence of a forward-backward-forward proximal-type algorithm with inertial and memory effects when minimizing the sum of a nonsmooth function with 
a smooth one in the absence of convexity. The convergence is obtained provided an appropriate regularization of the objective satisfies the Kurdyka-\L{}ojasiewicz inequality, 
which is for instance fulfilled for semi-algebraic functions.\vspace{1ex}

\noindent \textbf{Key Words.} nonsmooth optimization, limiting subdifferential, Kurdyka-\L{}ojasiewicz inequality, Bregman distance, inertial proximal algorithm,  Tseng's type proximal algorithm\vspace{1ex}

\noindent \textbf{AMS subject classification.}  90C26, 90C30, 65K10 

\section{Introduction}\label{sec-intr}

In this work we deal with optimization problems of the form 
$$ (P) \ \inf_{x\in\R^m}[f(x)+h(x)],$$
where $f:\R^m\rightarrow (-\infty,+\infty]$ is a proper and lower semicontinuous function and $h : \R^m\rightarrow\R$ is a Fr\'{e}chet differentiable function with Lipschitz continuous gradient. 

In the full convex setting, namely when $f$ and $h$ are convex functions, a plenty of proximal-type splitting numerical schemes for solving $(P)$ is available. We mention here 
the {\it forward-backward} algorithm (see for example \cite{bauschke-book}), 
the {\it forward-backward-forward} algorithm \cite{tseng, Tse91} and also the very popular FISTA \cite{BecTeb09}, which is an accelerated version of forward-backward algorithm  under the use of step sizes considered in 
the sense of Nesterov. 

{\it Splitting} algorithms share in this contex the property that the functions $f$ and $h$ are evaluated in the iterative scheme separately. More precisely, a {\it forward step} means an evaluation of the 
smooth part through the gradient, while a {\it backward step} is nothing else than evaluating the nonsmooth counterpart via its {\it proximal operator}. The above mentioned algorithms 
have been applied when solving different real-life problems arising, for instance, in areas like image processing, multifacility location, average consensus in network coloring, support vector machines classification, clustering, etc. 
To the majority of these splitting methods inertial and memory effects have been induced, giving rise to so-called {\it inertial proximal point algorithms}. These iterative schemes have their origins in the time discretization of some differential inclusions of second order type (see \cite{alvarez2000, alvarez-attouch2001}) and share the feature that the new iterate is defined by using the previous two iterates. The increasing interest in this class of algorithms is emphasized by a considerable number of papers written in the last fifteen years on this topic,  see  
\cite{alvarez2000, alvarez-attouch2001, alvarez2004, att-peyp-red, b-c-inertial, b-c-h-inertial, mainge2008, mainge-moudafi2008, moudafi-oliny2003, cabot-frankel2011, pesq-pust}. 

The generalization of the convergence of proximal-type algorithms to the nonconvex setting is a challenging ongoing research topic. By assuming that the functions in the objective share some analytic features and 
by making consequently use of a generalization to the nonsmooth setting of  the \textit{Kurdyka-\L{}ojasiewicz property} known for smooth functions, the proximal-point algorithm for minimizing a proper and lower semicontinuous function and  
the forward-backward scheme for solving problems of the form $(P)$ have proved to possess good convergence properties also in the nonconvex case, see 
\cite{attouch-bolte2009, att-b-red-soub2010, att-b-sv2013, b-sab-teb, c-pesquet-r, f-g-peyp}. This particular class of functions, called  \textit{KL functions}, include 
semi-algebraic functions, real sub-analytic functions, semi-convex functions, uniformly convex functions, etc. (see also \cite{b-d-l-m2010, kurdyka1998, lojasiewicz1963}). 
The interest of having convergence properties in the nonconvex setting is motivated among others by applications in connection to 
sparse nonnegative matrix factorization, hard constrained feasibility, compressive sensing, etc. In what regards the latter, they give rise to the solving of optimization problems of the 
form 
$$ \min_{x\in\R^m} \left \{\lambda\|x\|_0+\frac{1}{2}\|Ax-b\|^2 \right \},$$ 
where $\lambda >0$, $\|\cdot\|_0$ is the \textit{counting norm}, $A$ is an $n\times m$ real matrix and $b\in\R^n$. Due to the fact that the counting norm is a semi-algebraic function,
algorithms for solving nonsmooth optimization problems involving KL functions represent a serious option in this sense. Let us mention that an inertial version of the forward-backward algorithm for solving 
the optimization problem $(P)$ has been proposed in \cite{ipiano}, by assuming that a regularization of the objective function 
is a KL function and that the nonsmooth function $f$ is convex.

In this paper we investigate the convergence properties of the forward-backward-forward algorithm for solving $(P)$ in the full nonconvex setting. For the backward step we use a generalization 
of the proximal operator, not only by considering it to be, as it is natural in the nonconvex setting, a set-valued mapping, but also by replacing in its standard formulation the squared-norm by the  
{\it Bregman distance} of a strongly convex and differentiable function with Lipschitz-continuous gradient. In the iterative scheme we also make use of an inertial term which assumes employing in the definition of a new iterate
the previous two iterates. The techniques for proving the convergence of the numerical scheme use the same three main ingredients, as other algorithms for nonconvex optimization problems involving 
KL functions. More precisely, we show a sufficient decrease property for the iterates, the existence of a subgradient lower bound for the iterates gap and, finally, we use some analytic features of the objective function in order 
to obtain convergence, see \cite{b-sab-teb, att-b-sv2013}. The  {\it limiting (Mordukhovich) subdifferential} and its properties play an important role in the analysis. The main result of this paper shows that, along some mild assumptions, 
provided an appropriate regularization of the objective satisfies the Kurdyka-\L{}ojasiewicz property, the convergence of the forward-backward-forward algorithm is guaranteed. As a particular instance, 
we also treat the case when the objective function is semi-algebraic and present the convergence properties of the algorithm. This makes it suitable fo solving nonsmooth optimization problems involving 
semi-algebraic functions which occurr in real-life applications, as mentioned above.

\section{Preliminaries}\label{prel}

Let us recall some notions and results which are needed in the following, see for example \cite{rock-wets}. 
Let $\N= \{0,1,2,...\}$ be the set of nonnegative integers. For $m\geq 1$, the Euclidean scalar product and the induced norm on $\R^m$
are denoted by $\langle\cdot,\cdot\rangle$ and $\|\cdot\|$, respectively. Notice that all the finite-dimensional spaces considered in the 
manuscript are endowed with the topology induced by the Euclidean norm. 

The {\it domain} of the function 
$f:\R^m\rightarrow (-\infty,+\infty]$ is defined by $\dom f=\{x\in\R^m:f(x)<+\infty\}$. We say that $f$ is {\it proper} if $\dom f\neq\emptyset$.  
Further we recall some generalized subdifferential notions and the basic properties which are needed in the paper, see \cite{boris-carte, rock-wets}. 
Let $f:\R^m\rightarrow (-\infty,+\infty]$ be a proper and lower semicontinuous function. If $x\in\dom f$, we consider the {\it Fr\'{e}chet (viscosity)  
subdifferential} of $f$ at $x$ as the set $$\hat{\partial}f(x)= \left \{v\in\R^m: \liminf_{y\rightarrow x}\frac{f(y)-f(x)-\<v,y-x\>}{\|y-x\|}\geq 0 \right \}.$$ For 
$x\notin\dom f$ we set $\hat{\partial}f(x):=\emptyset$. The {\it limiting (Mordukhovich) subdifferential} is defined at $x\in \dom f$ by 
$$\partial f(x)=\{v\in\R^m:\exists x_n\rightarrow x,f(x_n)\rightarrow f(x)\mbox{ and }\exists v_n\in\hat{\partial}f(x_n),v_n\rightarrow v \mbox{ as }n\rightarrow+\infty\},$$
while for $x \notin \dom f$, one takes $\partial f(x) :=\emptyset$.

Notice that in case $f$ is convex, these notions coincide with the {\it convex subdifferential}, which means that  
$\hat\partial f(x)=\partial f(x)=\{v\in\R^m:f(y)\geq f(x)+\<v,y-x\> \forall y\in \R^m\}$ for all $x\in\dom f$. 

Notice the inclusion $\hat\partial f(x)\subseteq\partial f(x)$ for each $x\in\R^m$. We will use the following closedness criteria 
concerning the graph of the limiting subdifferential: if $(x_n)_{n\in\N}$ and $(v_n)_{n\in\N}$ are sequences in $\R^m$ such that 
$v_n\in\partial f(x_n)$ for all $n\in\N$, $(x_n,v_n)\rightarrow (x,v)$ and $f(x_n)\rightarrow f(x)$ as $n\rightarrow+\infty$, then 
$v\in\partial f(x)$. 

The Fermat rule reads in this nonsmooth setting as: if $x\in\R^m$ is a local minimizer of $f$, then $0\in\partial f(x)$. Notice that 
in case $f$ is continuously differentiable at $x \in \R^m$ we have $\partial f(x)=\{\nabla f(x)\}$. Let us denote by 
$$\crit(f)=\{x\in\R^m: 0\in\partial f(x)\}$$ the set of {\it (limiting)-critical points} of $f$. Let us mention also the following subdifferential rule: 
if $f:\R^m\rightarrow(-\infty,+\infty]$ is proper and lower semicontinuous  and $h:\R^m\rightarrow \R$ is a continuously differentiable function, then $\partial (f+h)(x)=\partial f(x)+\nabla h(x)$ for all $x\in\R^m$. 

We turn now our attention to functions satisfying the {\it Kurdyka-\L{}ojasiewicz property}. This class of functions will play 
a crucial role in the convergence results of the proposed algorithm. For $\eta\in(0,+\infty]$, we denote by $\Theta_{\eta}$ the class of concave and continuous functions 
$\varphi:[0,\eta)\rightarrow [0,+\infty)$ such that $\varphi(0)=0$, $\varphi$ is continuously differentiable on $(0,\eta)$, continuous at $0$ and $\varphi'(s)>0$ for all 
$s\in(0, \eta)$. In the following definition (see \cite{att-b-red-soub2010, b-sab-teb}) we use also the {\it distance function} to a set, defined for $A\subseteq\R^m$ as $\dist(x,A)=\inf_{y\in A}\|x-y\|$  
for all $x\in\R^m$. 

\begin{definition}\label{KL-property} \rm({\it Kurdyka-\L{}ojasiewicz property}) Let $f:\R^m\rightarrow(-\infty,+\infty]$ be a proper and lower semicontinuous 
function. We say that $f$ satisfies the {\it Kurdyka-\L{}ojasiewicz (KL) property} at $\ol x\in \dom\partial f=\{x\in\R^m:\partial f(x)\neq\emptyset\}$ 
if there exists $\eta \in(0,+\infty]$, a neighborhood $U$ of $\ol x$ and a function $\varphi\in \Theta_{\eta}$ such that for all $x$ in the 
intersection 
$$U\cap \{x\in\R^m: f(\ol x)<f(x)<f(\ol x)+\eta\}$$ the following inequality holds 
$$\varphi'(f(x)-f(\ol x))\dist(0,\partial f(x))\geq 1.$$
If $f$ satisfies the KL property at each point in $\dom\partial f$, then $f$ is called a {\it KL function}. 
\end{definition}

The origins of this notion go back to the pioneering work of \L{}ojasiewicz \cite{lojasiewicz1963}, where it is proved that for a real-analytic function 
$f:\R^m\rightarrow\R$ and a critical point $\ol x\in\R^m$ (that is $\nabla f(\ol x)=0$), there exists $\theta\in[1/2,1)$ such that the function 
$|f-f(\ol x)|\|\nabla f\|^{-1}$ is bounded around $\ol x$. This corresponds to the situation when $\varphi(s)=s^{1-\theta}$. The result of 
\L{}ojasiewicz allows the interpretation of the KL property as a reparameterization of the function values in order to avoid flatness around the 
critical points. Kurdyka \cite{kurdyka1998} extended this property to differentiable functions definable in an o-minimal structure. 
Further extensions to the nonsmooth setting can be found in \cite{b-d-l2006, att-b-red-soub2010, b-d-l-m2010}. 

One of the remarkable properties of the KL functions is their ubiquitous in applications, according to \cite{b-sab-teb}. To the class of KL functions belong semi-algebraic, real sub-analytic, semiconvex, uniformly convex and 
convex functions satisfying a growth condition. We refer the reader to 
\cite{b-d-l2006, att-b-red-soub2010, b-d-l-m2010, b-sab-teb, att-b-sv2013, attouch-bolte2009} and the references theirin  for more details regarding all the classes mentioned above and illustrating examples. 

An important role in our convergence analysis will be played by the following uniformized KL property given in \cite[Lemma 6]{b-sab-teb}. 

\begin{lemma}\label{unif-KL-property} Let $\Omega\subseteq \R^m$ be a compact set and let $f:\R^m\rightarrow(-\infty,+\infty]$ be a proper 
and lower semicontinuous function. Assume that $f$ is constant on $\Omega$ and $f$ satisfies the KL property at each point of $\Omega$.   
Then there exist $\varepsilon,\eta >0$ and $\varphi\in \Theta_{\eta}$ such that for all $\ol x\in\Omega$ and for all $x$ in the intersection 
\begin{equation}\label{int} \{x\in\R^m: \dist(x,\Omega)<\varepsilon\}\cap \{x\in\R^m: f(\ol x)<f(x)<f(\ol x)+\eta\}\end{equation} 
the following inequality holds \begin{equation}\label{KL-ineq}\varphi'(f(x)-f(\ol x))\dist(0,\partial f(x))\geq 1.\end{equation}
\end{lemma}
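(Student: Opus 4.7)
The plan is a standard compactness argument that glues together the pointwise KL data on $\Omega$ into a single uniform triple $(\varepsilon,\eta,\varphi)$.

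First, since $f$ is constant on $\Omega$, denote its common value by $c$, so that $f(\ol x)=c$ for every $\ol x\in\Omega$. For each $\ol x\in\Omega$, the KL property at $\ol x$ yields a radius $\varepsilon_{\ol x}>0$, a real $\eta_{\ol x}\in(0,+\infty]$ and a function $\varphi_{\ol x}\in\Theta_{\eta_{\ol x}}$ such that the KL inequality $\varphi'_{\ol x}(f(x)-c)\dist(0,\partial f(x))\geq 1$ holds whenever $x\in B(\ol x,\varepsilon_{\ol x})$ and $c<f(x)<c+\eta_{\ol x}$. (If $\ol x\notin\dom\partial f$, the statement is vacuous there and one may discard such points, since the set over which we will need the inequality is in any case contained in $\dom\partial f$ via the inequality itself.)

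Next, the family $\{B(\ol x,\varepsilon_{\ol x}/2):\ol x\in\Omega\}$ is an open cover of the compact set $\Omega$, so we can extract a finite subcover corresponding to points $\ol x_1,\ldots,\ol x_N\in\Omega$ with radii $\varepsilon_i:=\varepsilon_{\ol x_i}$ and KL data $\eta_i:=\eta_{\ol x_i}$, $\varphi_i:=\varphi_{\ol x_i}$. I would then set
$$\varepsilon:=\tfrac{1}{2}\min_{1\leq i\leq N}\varepsilon_i,\qquad \eta:=\min_{1\leq i\leq N}\eta_i,\qquad \varphi:=\sum_{i=1}^N \varphi_i.$$
The sum $\varphi$ belongs to $\Theta_\eta$ (concavity, continuity on $[0,\eta)$, $C^1$ on $(0,\eta)$, vanishing at $0$ and positive derivative are all preserved by finite sums), and by construction $\varphi'(s)\geq \varphi'_i(s)$ for every $i$ and every $s\in(0,\eta)$.

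Finally, take any $\ol x\in\Omega$ and any $x$ in the intersection \eqref{int}. Since $\dist(x,\Omega)<\varepsilon$ and $\Omega$ is compact, there exists $y\in\Omega$ with $\|x-y\|<\varepsilon$; the finite subcover gives some index $i$ with $\|y-\ol x_i\|<\varepsilon_i/2$, hence by the triangle inequality $\|x-\ol x_i\|<\varepsilon_i/2+\varepsilon\leq \varepsilon_i$, i.e.\ $x\in B(\ol x_i,\varepsilon_i)$. Moreover $f(\ol x_i)=c=f(\ol x)$ and $c<f(x)<c+\eta\leq c+\eta_i$, so the pointwise KL inequality at $\ol x_i$ yields $\varphi'_i(f(x)-c)\dist(0,\partial f(x))\geq 1$, and monotonicity $\varphi'(f(x)-c)\geq \varphi'_i(f(x)-c)$ transfers this to the desired \eqref{KL-ineq}.

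The only mildly delicate point, and the one I would double-check, is that summing the $\varphi_i$ genuinely keeps us inside $\Theta_\eta$ (in particular that $\varphi$ remains concave and the derivative estimate is pointwise correct on the common domain $(0,\eta)$); everything else is pure covering arithmetic. No sharper estimate than $\varphi'\geq \varphi'_i$ is needed, because the KL inequality is preserved under enlarging $\varphi'$.
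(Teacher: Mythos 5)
The paper does not prove this lemma itself --- it is imported verbatim from \cite[Lemma 6]{b-sab-teb} --- but your covering argument is exactly the standard proof given there and it is correct: the finite subcover of $\Omega$ by the half-radius balls, the choices $\varepsilon=\tfrac{1}{2}\min_i\varepsilon_i$, $\eta=\min_i\eta_i$, $\varphi=\sum_i\varphi_i$, and the monotonicity $\varphi'\geq\varphi_i'$ together with $\dist(0,\partial f(x))\geq 0$ carry the pointwise inequality over to the uniform one. The point you flagged is indeed harmless: concavity, continuity, continuous differentiability on $(0,\eta)$, the value $0$ at $0$ and the strict positivity of the derivative are all preserved under finite sums, so $\varphi\in\Theta_\eta$.
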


We close this section by presenting two convergence results which will play a determined role in the proof of the results we provide in the next section. The first one was often used in the literature
in context of Fej\'{e}r monotonicity techniques for proving convergence results of classical algorithms for convex optimization problems or more generally for monotone inclusion problems (see \cite{bauschke-book}). 
The second one is probably also known, however we include some details of its proof for the sake of completeness. 

\begin{lemma}\label{fejer1} Let $(a_n)_{n\in\N}$ and $(b_n)_{n\in\N}$ be real sequences such that $b_n\geq 0$ for all $n\in\N$, 
$(a_n)_{n\in\N}$ is bounded below and $a_{n+1}+b_n\leq a_n$ for all $n\in\N$. Then $(a_n)_{n\in\N}$ is a monotically decreasing and convergent 
sequence and $\sum_{n\in \N}b_n< + \infty$.
\end{lemma}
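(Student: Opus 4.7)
The plan is to carry out the standard three-step argument for this kind of telescoping inequality, which is essentially routine once the sign condition $b_n\geq 0$ is exploited.

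First, I would establish monotonicity. From $a_{n+1}+b_n\leq a_n$ and $b_n\geq 0$ one reads off $a_{n+1}\leq a_n$ directly, so $(a_n)_{n\in\N}$ is monotonically decreasing. Combined with the assumption that $(a_n)_{n\in\N}$ is bounded below, the monotone convergence theorem yields convergence to some limit $a\in\R$.

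Second, I would obtain summability of $(b_n)_{n\in\N}$ by telescoping. Rewriting the hypothesis as $b_n\leq a_n-a_{n+1}$ and summing over $n=0,1,\ldots,N$ gives
\[
\sum_{n=0}^{N} b_n \;\leq\; a_0-a_{N+1}.
\]
Letting $N\to+\infty$, the right-hand side tends to $a_0-a<+\infty$. Since the partial sums of the nonnegative series $\sum_{n\in\N}b_n$ are monotonically increasing and bounded above by $a_0-a$, the series converges, which gives $\sum_{n\in\N}b_n<+\infty$.

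There is no real obstacle here; the only thing one has to be careful about is to invoke $b_n\geq 0$ in both places (first to deduce monotonicity of $(a_n)$, and second to conclude that boundedness of partial sums implies convergence of the series). The whole argument is a few lines and uses nothing beyond the monotone convergence theorem and a telescoping sum.
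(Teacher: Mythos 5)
Your argument is correct and complete: monotonicity follows from $b_n\geq 0$, convergence from the monotone convergence theorem, and summability from the telescoping bound $\sum_{n=0}^{N}b_n\leq a_0-a_{N+1}\leq a_0-a$. The paper itself omits the proof of this lemma (citing it as a standard fact used in Fej\'{e}r monotonicity arguments), and the routine telescoping argument you give is exactly the intended one.
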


\begin{lemma}\label{fejer2} Let $(\xi_n)_{n\in\N}$ and $(\varepsilon_n)_{n\in\N}$ be sequences in $[0,+\infty)$ such that $\sum_{n\in \N}\varepsilon_n<+\infty$ 
and $\xi_{n+1}\leq a\xi_n+b\xi_{n-1}+\varepsilon_n$ for all $n\geq 1$, where $a\in\R$, $b\geq 0$ and $a+b<1$. Then $\sum_{n\in \N}\xi_n<+\infty$.
\end{lemma}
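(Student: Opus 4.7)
The plan is to reduce the two-step inequality to a one-step inequality by introducing an auxiliary sequence $\eta_n := \xi_n + c\,\xi_{n-1}$, $n \geq 1$, with a suitably chosen constant $c \geq 0$. The choice of $c$ is motivated by the characteristic polynomial $\lambda^2 - a\lambda - b$ of the associated linear recurrence: I will take $c := (-a + \sqrt{a^2+4b})/2$ and set $\alpha := a + c = (a + \sqrt{a^2+4b})/2$, so that $c \geq 0$ and the algebraic identities $a + c = \alpha$ and $\alpha c = b$ hold by construction. The first real verification is that $\alpha \in [0,1)$: non-negativity follows from $\sqrt{a^2+4b} \geq |a|$, while $\alpha < 1$ rewrites as $\sqrt{a^2+4b} < 2-a$, and squaring---legitimate because $b \geq 0$ and $a+b<1$ force $a < 1$---reduces precisely to the hypothesis $a + b < 1$.

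Once $c$ is fixed, I would substitute the assumed recurrence into the definition of $\eta_{n+1}$ and use the two identities above to collapse the mixed expression in $\xi_n$ and $\xi_{n-1}$ into a single multiple of $\eta_n$:
\begin{equation*}
\eta_{n+1} = \xi_{n+1} + c\,\xi_n \leq (a+c)\,\xi_n + b\,\xi_{n-1} + \varepsilon_n = \alpha\,\xi_n + \alpha c\,\xi_{n-1} + \varepsilon_n = \alpha\,\eta_n + \varepsilon_n
\end{equation*}
for every $n \geq 1$. At this point the problem has been reduced to a first-order inequality with contraction factor $\alpha \in [0,1)$, non-negative iterates $(\eta_n)_{n\geq 1}$, and a summable perturbation $(\varepsilon_n)_{n\in\N}$. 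Summing from $n=1$ to $N$ and rearranging yields a bound on $\sum_{k=2}^{N+1}\eta_k$ of the form $\bigl(\alpha\eta_1 + \sum_{n\in\N}\varepsilon_n\bigr)/(1-\alpha)$ that is uniform in $N$, so $\sum_{n\geq 1}\eta_n < +\infty$. Since $\xi_n \leq \eta_n$ by construction (as $c,\xi_{n-1} \geq 0$), adding the finite initial term $\xi_0$ gives $\sum_{n\in\N}\xi_n < +\infty$.

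The only genuinely delicate point is guessing the form of the auxiliary sequence; once the characteristic-polynomial viewpoint suggests the correct value of $c$, every remaining step is elementary algebra or the standard geometric-type summation argument for first-order recursive inequalities. The hypothesis $a+b<1$ is used in exactly one place, namely to force $\alpha < 1$, and this is precisely what makes the concluding summation argument converge.
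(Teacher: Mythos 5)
Your proof is correct, but it follows a different route from the paper's. The paper simply sums the second-order inequality $\xi_{n+1}\leq a\xi_n+b\xi_{n-1}+\varepsilon_n$ over $n=1,\dots,k$, reindexes the three sums against $\sum_{n=0}^k\xi_n$, discards the nonnegative boundary terms $\xi_{k+1}$ and $b\xi_k$, and obtains directly $(1-a-b)\sum_{n=0}^k\xi_n\leq(1-a)\xi_0+\xi_1+\sum_{n=1}^k\varepsilon_n$, from which boundedness of the partial sums follows since $1-a-b>0$. You instead first linearize: the characteristic root $c=(-a+\sqrt{a^2+4b})/2$ turns $\eta_n=\xi_n+c\xi_{n-1}$ into a first-order recursion $\eta_{n+1}\leq\alpha\eta_n+\varepsilon_n$ with $\alpha=a+c\in[0,1)$, and only then do you sum. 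All the verifications you flag ($c\geq0$, $\alpha\geq0$, $\alpha<1\Leftrightarrow a+b<1$ via squaring with $2-a>0$, and $\xi_n\leq\eta_n$) are sound, and your concluding summation is exactly the paper's argument applied to the reduced inequality. The paper's computation is shorter and avoids introducing any auxiliary quantities; your reduction costs a little algebra up front but buys an explicit contraction factor $\alpha$, which would additionally yield a geometric decay rate for $\eta_n$ (hence for $\xi_n$) when the perturbations $\varepsilon_n$ decay geometrically --- information the direct summation does not expose. Either proof is acceptable here, since the lemma is only used to conclude summability.
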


\begin{proof} Fix $k\geq 1$ a positive integer. Summing up the inequality from the hypotheses for $n=1,...,k$, we obtain $\sum_{n=0}^k\xi_n+\xi_{k+1}-\xi_0-\xi_1\leq a\sum_{n=0}^k\xi_n+
b\sum_{n=0}^k\xi_n-a\xi_0-b\xi_k+\sum_{n=1}^k\varepsilon_n$. Since $\xi_n\geq 0$ for all $n\in\N$ and $b\geq 0$, we get 
$(1-a-b)\sum_{n=0}^k\xi_n\leq (1-a)\xi_0+\xi_1+\sum_{n=1}^k\varepsilon_n$ and the conclusion follows. \end{proof}

\section{An inertial forward-backward-forward algorithm}\label{sec2}

We investigate in this section the convergence properties of the inertial Tseng's type algorithm for solving nonsmooth and nonconvex optimization problems. 
We consider the following setting. \vspace{0.2cm}

\noindent {\bf Problem 1.} Let $m\geq 1$ by a positive integer, $f:\R^m\rightarrow (-\infty,+\infty]$ be a proper, lower semicontinuous function which 
is bounded from below and $h:\R^m\rightarrow\R$ a Fr\'{e}chet differentiable function such that $\nabla h$ is $L_{\nabla h}$-Lipschitz continuous with $L_{\nabla h}\geq 0$. We aim to solve the optimization problem 
\begin{equation}\label{opt-pb} (P) \ \inf_{x\in\R^m}[f(x)+h(x)] \end{equation}
by approximating the set of critical points of the objective function through a sequence generated via a forward-backward-forward algorithm of inertial-type. 

More precisely, we propose the following iterative scheme. \vspace{0.2cm}

\noindent{\bf Algorithm 1.} Chose $x_0,x_1\in\R^m$, $\ul\lambda,\ol\lambda > 0$, $\alpha\geq 0$ and the sequences 
$(\lambda_n)_{n\geq 1},(\alpha_n)_{n\geq 1}$ fulfilling $$0\leq\alpha_n\leq\alpha \ \forall n\geq 1$$ and 
$$0<\ul\lambda\leq\lambda_n\leq\ol\lambda \ \forall n\geq 1.$$ Consider the iterative scheme  
\begin{equation}\label{tseng-inertial-bregman-nonconv}(\forall n\geq 1)\hspace{0.2cm}\left\{
\begin{array}{ll}
p_n\in\argmin_{x\in\R^m}\left[f(x)+\frac{1}{\lambda_n}D_u(x,x_n)+\<x,\nabla h(x_n)\>+\frac{\alpha_n}{\lambda_n}\<x,x_{n-1}-x_n\>\right]\\
x_{n+1}=p_n+\lambda_n[\nabla h(x_n)-\nabla h(p_n)].
\end{array}\right.\end{equation}
Here,
$$D_u:\R^m\times\R^m\rightarrow\R,\ D_u(x,y)=u(x)-u(y)-\<\nabla u(y),x-y\>,$$
denotes the {\it Bregman distance} of a function $u:\R^m\rightarrow\R$ assumed to be  $\sigma$-strongly convex  with parameter $\sigma>0$ (that is 
$u-\frac{\sigma}{2}\|\cdot\|^2$ is a convex function), differentiable and such that $\nabla u$ is $L_{\nabla u}$-Lipschitz continuous with $L_{\nabla u}>0$. 

Notice that the properties of the function $u$ guarantees the following inequality (see for example \cite{bauschke-book}) 
\begin{equation}\label{ineq-D}\frac{\sigma}{2}\|x-y\|^2\leq D_u(x,y)\leq \frac{L_{\nabla u}}{2}\|x-y\|^2 \ \forall (x,y)\in\R^m\times\R^m.\end{equation}

Further, since $f$ is proper, lower semicontinuous and bounded from below and $D_u$ is coercive in its first argument 
(that is $\lim_{\|x\|\rightarrow+\infty}D_u(x,y)=+\infty$ for all $y\in\R^m$), the iterative scheme is well-defined, meaning 
that the existence of $p_n$ is guaranteed for each $n \geq 1$, since the objective function in the minimization problem to be solved at each iteration is coercive. 

Before we proceed with the convergence analysis, we discuss the relation of our scheme to other algorithms from the literature. Let us take first $u(x)=\frac{1}{2}\|x\|^2$ for all $x\in\R^m$. 
In this case $D_u(x,y)=\frac{1}{2}\|x-y\|^2$ for all $(x,y)\in\R^m\times\R^m$ and $\sigma=L_{\nabla u}=1$. The iterative scheme becomes 
\begin{equation}\label{tseng-inertial-nonconv}(\forall n\geq 1)\hspace{0.2cm}\left\{
\begin{array}{ll}
p_n\in\argmin_{x\in\R^m}\left[f(x)+\frac{1}{2\lambda_n}\left\|x-x_n+\lambda_n\nabla h(x_n)-\alpha_n(x_n-x_{n-1})\right\|^2\right]\\
x_{n+1}=p_n+\lambda_n[\nabla h(x_n)-\nabla h(p_n)]. 
\end{array}\right.\end{equation}
The convergence of this inertial Tseng's type algorithm has been analyzed in \cite{b-c-inertial} in the full convex setting, which means that $f$ and $h$ are convex
functions, in which case $p_n$ is uniquely determined and can be expressed via the {\it proximal operator} of $f$ (let us notice that
in contrast to \cite{b-c-inertial}, we do not impose here $(\alpha_n)_{n\geq 1}$ to be nondecreasing). Let us mention 
that inertial-type algorithms in the nonconvex setting have been proposed in \cite{ipiano}, where the inertial forward-backward algorithm 
from \cite{moudafi-oliny2003} has been extended from the convex setting  to KL functions, hoewever, by imposing convexity for $f$.

If we take, in addition, on the one hand, $\alpha=0$, which enforces $\alpha_n=0$ for all $n\geq 1$, then \eqref{tseng-inertial-nonconv} becomes 
\begin{equation}\label{tseng-nonconv}(\forall n\geq 1)\hspace{0.2cm}\left\{
\begin{array}{ll}
p_n\in\argmin_{x\in\R^m}\left[f(x)+\frac{1}{2\lambda_n}\left\|x-x_n+\lambda_n\nabla h(x_n)\right\|^2\right]\\
x_{n+1}=p_n+\lambda_n[\nabla h(x_n)-\nabla h(p_n)], 
\end{array}\right.\end{equation}
which is an extension to the nonconvex setting of the classical Tseng's type algorithm \cite{tseng}. The convergence of \eqref{tseng-nonconv} has been considered in \cite{tseng, br-combettes} in the full convex setting. 
Let us also mention that a forward-backward algorithm with variable metric for KL functions has been recently introduced and investigated in \cite{f-g-peyp}. 

On the other hand, if we take $h(x)=0$ for all $x\in\R^m$, the iterative scheme in \eqref{tseng-inertial-nonconv} becomes 
\begin{equation}\label{tseng-inertial-nonconv-proximal-f}(\forall n\geq 1) \ 
x_{n+1}\in\argmin_{x\in\R^m}\left[f(x)+\frac{1}{2\lambda_n}\left\|x-x_n-\alpha_n(x_n-x_{n-1})\right\|^2\right],
\end{equation} which is a proximal point algorithm with inertial and memory effects formulated in the nonconvex setting designed for finding the critical points 
of $f$. The iterative scheme without the inertial term, that is when $\alpha=0$ and, so, $\alpha_n=0$ for all $n \geq 1$, has been considered in the context of KL functions in \cite{attouch-bolte2009}. 

We proceed now with the convergence analysis of our algorithm. The following descent lemma (see for example \cite[Lemma 1.2.3]{nes}) 
will be useful in the sequel. 

\begin{lemma}\label{descent-lemma} Let $h:\R^m\rightarrow \R$ be a Fr\'{e}chet differentiable function with $L_{\nabla h}$-Lipschitz continuous gradient. Then we have 
$$h(y)\leq h(x)+\langle \nabla h(x),y-x\rangle+\frac{L_{\nabla h}}{2}\|y-x\|^2 \ \forall (x,y)\in\R^m\times\R^m.$$
\end{lemma}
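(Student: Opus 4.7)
The plan is to prove this classical descent inequality by the standard route of combining the fundamental theorem of calculus along the segment from $x$ to $y$ with the Lipschitz continuity of $\nabla h$. First I would fix $x,y\in\R^m$ and consider the scalar function $\varphi:[0,1]\rightarrow\R$, $\varphi(t)=h(x+t(y-x))$. Since $h$ is Fr\'{e}chet differentiable and $\nabla h$ is (Lipschitz, hence) continuous, $\varphi$ is continuously differentiable with derivative $\varphi'(t)=\<\nabla h(x+t(y-x)),y-x\>$, so the fundamental theorem of calculus gives
$$h(y)-h(x)=\int_0^1 \<\nabla h(x+t(y-x)),y-x\>\,dt.$$

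Next I would add and subtract $\<\nabla h(x),y-x\>$ inside the integrand to split off the affine term, obtaining
$$h(y)=h(x)+\<\nabla h(x),y-x\>+\int_0^1 \<\nabla h(x+t(y-x))-\nabla h(x),y-x\>\,dt.$$
The remaining step is to bound the residual integral. Applying the Cauchy-Schwarz inequality to the integrand and then the Lipschitz property of $\nabla h$ yields the pointwise estimate $\|\nabla h(x+t(y-x))-\nabla h(x)\|\cdot\|y-x\|\leq L_{\nabla h} t\|y-x\|^2$, and integrating $t$ over $[0,1]$ produces the factor $1/2$, which gives exactly the claimed bound $\tfrac{L_{\nabla h}}{2}\|y-x\|^2$.

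There is no real obstacle here: the only care needed is the justification of the integral representation, which is immediate from continuity of $\varphi'$ on the compact interval $[0,1]$, and a clean application of Cauchy-Schwarz under the integral sign, both of which are standard. Assembling the three displays completes the proof.
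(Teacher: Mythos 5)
Your proof is correct and is the standard argument; the paper itself gives no proof of this lemma but simply cites \cite[Lemma 1.2.3]{nes}, where precisely this integral-plus-Lipschitz reasoning is carried out. Nothing is missing: the integral representation, the Cauchy--Schwarz step, and the integration of $t$ over $[0,1]$ to produce the factor $\tfrac{1}{2}$ are all justified as you describe.
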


\begin{lemma}\label{lemma5} In the setting of Problem 1, consider the sequences generated by Algorithm 1. Then for every  $\nu,\mu>0$ the following inequality holds 
\begin{equation}\label{f-m1-m2} (f+h)(p_n)+M_1\|x_n-p_n\|^2\leq (f+h)(p_{n-1})+M_2\|x_{n-1}-p_{n-1}\|^2 \ \forall n\geq 2,
\end{equation}
where \begin{equation}\label{m1}M_1:=\frac{\sigma}{2\ol\lambda}-L_{\nabla h}-\nu-\frac{\alpha}{\ul\lambda}\mu\end{equation}
and
\begin{equation}\label{m2}M_2:=\ol\lambda^2 L_{\nabla h}^2\left(\frac{L_{\nabla h}^2}{2\nu}+\nu+L_{\nabla h}+\frac{L_{\nabla u}}{2\ul\lambda}\right)
+\frac{\alpha}{\ul\lambda}\left(\mu\ol\lambda^2L_{\nabla h}^2+\frac{(1+\ol\lambda L_{\nabla h})^2}{2\mu}\right).\end{equation}
\end{lemma}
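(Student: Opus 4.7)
My plan is to combine three ingredients: (i) the variational inequality arising from the optimality of $p_n$ in the proximal subproblem, tested against $p_{n-1}$; (ii) the descent lemma for $h$ applied at the base point $p_{n-1}$; and (iii) two Young's inequalities with parameters $\nu$ and $\mu$, together with the algorithmic identity $x_n - p_{n-1} = \lambda_{n-1}[\nabla h(x_{n-1}) - \nabla h(p_{n-1})]$ delivered by the second line of \eqref{tseng-inertial-bregman-nonconv}.

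Evaluating the minimization defining $p_n$ at the feasible test point $p_{n-1}$ and adding $h(p_n) - h(p_{n-1})$ to both sides yields
\[
(f+h)(p_n) + \tfrac{1}{\lambda_n}D_u(p_n,x_n) \leq (f+h)(p_{n-1}) + \tfrac{1}{\lambda_n}D_u(p_{n-1},x_n) + T_n + I_n,
\]
where $T_n := h(p_n)-h(p_{n-1}) - \langle p_n - p_{n-1},\nabla h(x_n)\rangle$ and $I_n := -\tfrac{\alpha_n}{\lambda_n}\langle p_n - p_{n-1}, x_{n-1} - x_n\rangle$. The crucial step is to bound $T_n$: I would decompose $\nabla h(x_n) = \nabla h(p_{n-1}) + [\nabla h(x_n)-\nabla h(p_{n-1})]$ and apply Lemma \ref{descent-lemma} to $h(p_n)$ \emph{at the base point $p_{n-1}$}; the $\nabla h(p_{n-1})$ contribution is absorbed and produces a single quadratic remainder $\tfrac{L_{\nabla h}}{2}\|p_n - p_{n-1}\|^2$, while the residual cross term $\langle p_n-p_{n-1}, \nabla h(p_{n-1})-\nabla h(x_n)\rangle$ is majorised via Cauchy--Schwarz, the Lipschitz property of $\nabla h$, and Young's inequality with parameter $\nu$, giving $\tfrac{\nu}{2}\|p_n-p_{n-1}\|^2 + \tfrac{L_{\nabla h}^2}{2\nu}\|x_n-p_{n-1}\|^2$. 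The inertial term $I_n$ is handled analogously by Young's inequality with parameter $\mu$, producing $\tfrac{\alpha_n}{\lambda_n}\bigl(\tfrac{\mu}{2}\|p_n-p_{n-1}\|^2 + \tfrac{1}{2\mu}\|x_n-x_{n-1}\|^2\bigr)$.

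To finish I would use the triangle-inequality bound $\|p_n-p_{n-1}\|^2 \leq 2\|p_n-x_n\|^2 + 2\|x_n-p_{n-1}\|^2$ to split every occurrence of $\|p_n-p_{n-1}\|^2$ between the two target quantities, and exploit the forward-step identity to convert $\|x_n-p_{n-1}\| \leq \ol\lambda L_{\nabla h}\|x_{n-1}-p_{n-1}\|$ and, by triangle inequality, $\|x_n-x_{n-1}\| \leq (1+\ol\lambda L_{\nabla h})\|x_{n-1}-p_{n-1}\|$. Combined with the Bregman bounds \eqref{ineq-D}, which give $\tfrac{1}{\lambda_n}D_u(p_n,x_n) \geq \tfrac{\sigma}{2\ol\lambda}\|p_n-x_n\|^2$ and $\tfrac{1}{\lambda_n}D_u(p_{n-1},x_n) \leq \tfrac{L_{\nabla u}}{2\ul\lambda}\|x_n-p_{n-1}\|^2$, and with the uniform bounds $\alpha_n \leq \alpha$ and $\ul\lambda \leq \lambda_n \leq \ol\lambda$, gathering terms reproduces $M_1$ and $M_2$ as given in \eqref{m1}--\eqref{m2}.

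The principal obstacle is the choice of expansion point in the descent lemma: expanding at $p_{n-1}$ rather than at $x_n$ is essential, because only then does the single quadratic remainder $\tfrac{L_{\nabla h}}{2}\|p_n-p_{n-1}\|^2$ (together with the companion $\nu$-contribution on the same quantity) deliver, after the triangle-inequality split, the full coefficient $L_{\nabla h}+\nu$ in front of $\|p_n-x_n\|^2$ appearing in $M_1$; the same mechanism also generates the $\tfrac{L_{\nabla h}^2}{2\nu}\|x_n-p_{n-1}\|^2$ summand that becomes $\ol\lambda^2 L_{\nabla h}^2 \cdot\tfrac{L_{\nabla h}^2}{2\nu}$ inside $M_2$. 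The remaining bookkeeping of the $\ol\lambda^2 L_{\nabla h}^2$ factor arising from the Tseng forward step is otherwise routine.
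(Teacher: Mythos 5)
Your proposal is correct and follows essentially the same route as the paper's proof: testing the optimality of $p_n$ against $p_{n-1}$, applying the descent lemma at the base point $p_{n-1}$, estimating the two cross terms by Young's inequalities with parameters $\nu$ and $\mu$, splitting $\|p_n-p_{n-1}\|^2$ via $\|p_n-p_{n-1}\|^2\leq 2\|x_n-p_n\|^2+2\|x_n-p_{n-1}\|^2$, and converting $\|x_n-p_{n-1}\|$ and $\|x_n-x_{n-1}\|$ through the forward step together with the Bregman bounds \eqref{ineq-D}. The bookkeeping you describe does reproduce the constants $M_1$ and $M_2$ exactly as in \eqref{m1}--\eqref{m2}.
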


\begin{proof} Let us chose $\nu,\mu>0$ arbitrary and fix $n\geq 2$. The rule given in \eqref{tseng-inertial-bregman-nonconv} yields the 
inequality 
$$f(p_n)+\frac{1}{\lambda_n}D_u(p_n,x_n)+\<p_n,\nabla h(x_n)\>+\frac{\alpha_n}{\lambda_n}\<p_n,x_{n-1}-x_n\> 
$$$$\leq f(p_{n-1})+\frac{1}{\lambda_n}D_u(p_{n-1},x_n)+\<p_{n-1},\nabla h(x_n)\>+\frac{\alpha_n}{\lambda_n}\<p_{n-1},x_{n-1}-x_n\>,$$
which combined with \eqref{ineq-D} and 
$$h(p_n)\leq h(p_{n-1})+\langle \nabla h(p_{n-1}),p_n-p_{n-1}\rangle+\frac{L_{\nabla h}}{2}\|p_n-p_{n-1}\|^2$$
gives 
\begin{align}\label{ineq1} (f+h)(p_n)+\frac{\sigma}{2\lambda_n}\|p_n-x_n\|^2 \leq \ &(f+h)(p_{n-1})+\frac{L_{\nabla u}}{2\lambda_n}\|x_n-p_{n-1}\|^2+\frac{L_{\nabla h}}{2}\|p_n-p_{n-1}\|^2\nonumber\\
 &+\<\nabla h(p_{n-1})-\nabla h(x_n),p_n-p_{n-1}\> \nonumber \\
 & +\frac{\alpha_n}{\lambda_n}\<p_n-p_{n-1},x_n-x_{n-1}\>.
\end{align}
 
According to \eqref{tseng-inertial-bregman-nonconv} we have
\begin{equation}\label{x_n-p_n-1}\|x_n-p_{n-1}\| = \lambda_{n-1} \|h(x_{n-1})-h(p_{n-1})\| \leq \lambda_{n-1}L_{\nabla h}\|x_{n-1}-p_{n-1}\| 
\end{equation}
and, from here,
\begin{equation}\label{x_n-x_n-1}\|x_n-x_{n-1}\|\leq (1+\lambda_{n-1}L_{\nabla h})\|x_{n-1}-p_{n-1}\| 
\end{equation}
and
\begin{equation}\label{p_n-p_n-1}\|p_n-p_{n-1}\|^2\leq 2(\|x_n-p_n\|^2+\lambda_{n-1}^2L_{\nabla h}^2\|x_{n-1}-p_{n-1}\|^2).
\end{equation}
Moreover, we have
\begin{equation}\label{nabla h} \<\nabla h(p_{n-1})-\nabla h(x_n),p_n-p_{n-1}\>\leq \frac{\nu}{2}\|p_n-p_{n-1}\|^2+\frac{L_{\nabla h}^2}{2\nu}\|x_n-p_{n-1}\|^2
\end{equation}
and \begin{equation}\label{scal-prod} \<p_n-p_{n-1},x_n-x_{n-1}\>\leq \frac{\mu}{2}\|p_n-p_{n-1}\|^2+\frac{1}{2\mu}\|x_n-x_{n-1}\|^2.
    \end{equation}
From \eqref{ineq1}-\eqref{scal-prod} we obtain after rearranging the terms that
\begin{equation}\label{f-m1n-m2n}(f+h)(p_n)+M_{1,n}\|x_n-p_n\|^2\leq (f+h)(p_{n-1})+M_{2,n}\|x_{n-1}-p_{n-1}\|^2,\end{equation}
where $$M_{1,n}=\frac{\sigma}{2\lambda_n}-L_{\nabla h}-\nu-\frac{\alpha_n}{\lambda_n}\mu$$
and $$M_{2,n}=\lambda_{n-1}^2 L_{\nabla h}^2\left(\frac{L_{\nabla h}^2}{2\nu}+\nu+L_{\nabla h}+\frac{L_{\nabla u}}{2\lambda_n}\right)
+\frac{\alpha_n}{\lambda_n}\left(\mu\lambda_{n-1}^2L_{\nabla h}^2+\frac{(1+\lambda_{n-1} L_{\nabla h})^2}{2\mu}\right).$$
Finally, by using the bounds given for the sequences of real numbers involved, we easily derive that $M_{1,n}\geq M_1$ and $M_{2,n}\leq M_2$ and the 
conclusion follows from \eqref{f-m1n-m2n}. 
\end{proof}

\begin{lemma}\label{m1>m2} In the setting of Problem 1, consider arbitrary $\nu,\mu>0$ and chose $\ul\lambda>0$ and $\alpha\geq 0$ such that 
\begin{align}\label{l-a} 
2\ul\lambda(L_{\nabla h}+\nu)  +\ul\lambda^2L_{\nabla h}^2\left(\ul\lambda\frac{L_{\nabla h}^2}{\nu}+L_{\nabla u}+2\ul\lambda(L_{\nabla h}+\nu)\right) & \nonumber \\
 +2\alpha\left(\mu+\mu\ul\lambda^2L_{\nabla h}^2+\frac{(1+\ul\lambda L_{\nabla h})^2}{2\mu}\right) & <\sigma.
\end{align}
Then there exists $\ol\lambda>\ul\lambda$  such that the constants introduced in Lemma \ref{lemma5} fulfill $M_1>M_2$.
\end{lemma}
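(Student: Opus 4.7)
The key observation is that both $M_1$ and $M_2$ depend continuously on $\ol\lambda$ (with the other parameters $\ul\lambda$, $\alpha$, $\nu$, $\mu$, $\sigma$, $L_{\nabla h}$, $L_{\nabla u}$ held fixed), with $M_1$ strictly decreasing and $M_2$ strictly increasing in $\ol\lambda$ on $(0,+\infty)$. Thus the strategy is to show that at the boundary value $\ol\lambda=\ul\lambda$ the required inequality $M_1>M_2$ already holds, and then to conclude by continuity that it persists for some $\ol\lambda$ slightly larger than $\ul\lambda$.

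The plan is as follows. First I would introduce the auxiliary function $\phi(t):=M_1|_{\ol\lambda=t}-M_2|_{\ol\lambda=t}$, defined for $t\geq\ul\lambda$, which is a continuous (in fact smooth) function of $t$. Second, I would evaluate $\phi(\ul\lambda)$: substituting $\ol\lambda=\ul\lambda$ in the expressions \eqref{m1} and \eqref{m2}, multiplying the resulting inequality $M_1>M_2$ by $2\ul\lambda>0$ and collecting terms, I expect to recover exactly the inequality \eqref{l-a}. More precisely, $2\ul\lambda M_1=\sigma-2\ul\lambda(L_{\nabla h}+\nu)-2\alpha\mu$, while $2\ul\lambda M_2$ reorganizes into $\ul\lambda^2L_{\nabla h}^2\bigl(\ul\lambda\tfrac{L_{\nabla h}^2}{\nu}+L_{\nabla u}+2\ul\lambda(L_{\nabla h}+\nu)\bigr)+2\alpha\bigl(\mu\ul\lambda^2L_{\nabla h}^2+\tfrac{(1+\ul\lambda L_{\nabla h})^2}{2\mu}\bigr)$, so that $M_1>M_2$ at $\ol\lambda=\ul\lambda$ is literally \eqref{l-a}. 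Hence $\phi(\ul\lambda)>0$. Third, by continuity of $\phi$ at $\ul\lambda$, there exists $\delta>0$ such that $\phi(t)>0$ for all $t\in[\ul\lambda,\ul\lambda+\delta)$. Choosing any $\ol\lambda\in(\ul\lambda,\ul\lambda+\delta)$ gives the claimed bound $M_1>M_2$.

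The main obstacle is the bookkeeping in Step 2: one must carefully expand the product $\ul\lambda^2L_{\nabla h}^2(\ul\lambda\tfrac{L_{\nabla h}^2}{\nu}+L_{\nabla u}+2\ul\lambda(L_{\nabla h}+\nu))$ and check that every coefficient coming from $M_2$ in \eqref{m2} (namely the $\tfrac{L_{\nabla h}^2}{2\nu}$, $\nu$, $L_{\nabla h}$ and $\tfrac{L_{\nabla u}}{2\ul\lambda}$ inside the first bracket, and the two terms inside the second bracket) matches the expansion of \eqref{l-a} once multiplied by $2\ul\lambda$. No new idea is required beyond algebraic verification, and once Step 2 is verified, the continuity argument in Step 3 is essentially one line.
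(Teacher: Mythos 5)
Your proposal is correct and follows essentially the same route as the paper: the paper likewise rewrites \eqref{l-a} as the statement that $2\ul\lambda\,(M_2-M_1)\big|_{\ol\lambda=\ul\lambda}<\sigma-\sigma$ (i.e.\ that $M_1>M_2$ holds at $\ol\lambda=\ul\lambda$ after multiplying through by $2\ul\lambda$) and then picks $\ol\lambda=\ul\lambda+\rho$ by continuity. Your algebraic identifications of $2\ul\lambda M_1$ and $2\ul\lambda M_2$ at $\ol\lambda=\ul\lambda$ check out against \eqref{m1} and \eqref{m2}.
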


\begin{proof} Relation \eqref{l-a} can be equivalently written as
\begin{align*}
2\ul\lambda \left[L_{\nabla h}+\nu+\frac{\alpha}{\ul\lambda}\mu+ \ul\lambda^2 L_{\nabla h}^2\left(\frac{L_{\nabla h}^2}{2\nu} +\frac{L_{\nabla u}}{2\ul\lambda} + \nu+L_{\nabla h}\right) \right . & \\
\left . +\frac{\alpha}{\ul\lambda}\left(\mu\ul\lambda^2L_{\nabla h}^2+\frac{(1+ \ul\lambda L_{\nabla h})^2}{2\mu}\right)\right] & <\sigma.
\end{align*}
Thus there exists $\rho>0$ such that 
\begin{align}
2(\ul\lambda+\rho)\left[L_{\nabla h}+\nu+\frac{\alpha}{\ul\lambda}\mu+(\ul\lambda+\rho)^2 L_{\nabla h}^2\left(\frac{L_{\nabla h}^2}{2\nu}+\nu+L_{\nabla h}+\frac{L_{\nabla u}}{2\ul\lambda}\right)  \right . & \nonumber \\
\left . +\frac{\alpha}{\ul\lambda}\left(\mu(\ul\lambda+\rho)^2L_{\nabla h}^2+\frac{(1+(\ul\lambda+\rho) L_{\nabla h})^2}{2\mu}\right)\right] & <\sigma.
\end{align}
We define $\ol\lambda:=\ul\lambda+\rho$ and from the above inequality the relation $M_1>M_2$ follows straightforwardly. 
\end{proof}

We give now a decrease property which will be useful in the following. 

\begin{lemma}\label{x_n-p_n^2} In the setting of Problem 1, suppose that $f+h$ is bounded from below and consider the sequences generated by Algorithm 1, where 
$\nu,\mu,\ul\lambda,\ol\lambda$ and $\alpha$ are chosen as in Lemma \ref{m1>m2}. Then the following statements are true:
\begin{itemize}\item[(i)]$\sum_{n\geq 1}\|x_n-p_n\|^2<+\infty$ and 
$\sum_{n\in\N}\|x_{n+1}-x_n\|^2<+\infty$; \item[(ii)] the sequence 
$\big((f+h)(p_n)+M_2\|x_n-p_n\|^2\big)_{n\geq 1}$ is  monotically decreasing  and convergent;
 \item[(iii)] the sequence $((f+h)(p_n))_{n\geq 1}$ is convergent.\end{itemize}
\end{lemma}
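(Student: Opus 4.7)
The plan is to combine Lemma \ref{lemma5} with the parameter choice from Lemma \ref{m1>m2} to manufacture a true Lyapunov descent sequence, then invoke Lemma \ref{fejer1}.

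First, I would rewrite the conclusion of Lemma \ref{lemma5} in the suggestive form
\[
\bigl[(f+h)(p_n)+M_2\|x_n-p_n\|^2\bigr]+(M_1-M_2)\|x_n-p_n\|^2 \leq (f+h)(p_{n-1})+M_2\|x_{n-1}-p_{n-1}\|^2
\]
for every $n\geq 2$. Setting $a_n:=(f+h)(p_n)+M_2\|x_n-p_n\|^2$ and $b_n:=(M_1-M_2)\|x_n-p_n\|^2$, the inequality reads $a_n+b_n\leq a_{n-1}$. By the choice of parameters in Lemma \ref{m1>m2} we have $M_1>M_2\geq 0$, hence $b_n\geq 0$. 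Moreover $a_n\geq (f+h)(p_n)$, which is bounded from below by assumption, so $(a_n)_{n\geq 1}$ is bounded below.

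Lemma \ref{fejer1} then directly gives statement (ii): $(a_n)_{n\geq 1}$ is monotonically decreasing and convergent, together with $\sum_{n\geq 1}b_n<+\infty$, which, since $M_1-M_2>0$, yields the first half of (i), namely $\sum_{n\geq 1}\|x_n-p_n\|^2<+\infty$. From the iterative scheme $x_{n+1}=p_n+\lambda_n[\nabla h(x_n)-\nabla h(p_n)]$ together with the Lipschitz property of $\nabla h$ and the bound $\lambda_n\leq\ol\lambda$, one obtains the elementary estimate
\[
\|x_{n+1}-x_n\|\leq \|p_n-x_n\|+\lambda_n L_{\nabla h}\|x_n-p_n\|\leq (1+\ol\lambda L_{\nabla h})\|x_n-p_n\|,
\]
so $\sum_{n\geq 0}\|x_{n+1}-x_n\|^2$ is controlled by $(1+\ol\lambda L_{\nabla h})^2\sum_{n\geq 1}\|x_n-p_n\|^2$ (plus the first term which is finite), completing (i).

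For (iii), I would simply write $(f+h)(p_n)=a_n-M_2\|x_n-p_n\|^2$. Since $\sum\|x_n-p_n\|^2<+\infty$ forces $\|x_n-p_n\|\to 0$, and $a_n$ converges by (ii), the sequence $((f+h)(p_n))_{n\geq 1}$ converges as well.

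There is no real obstacle here: the whole argument is essentially bookkeeping once Lemma \ref{lemma5} and Lemma \ref{m1>m2} are in place. The only mildly delicate point is ensuring boundedness from below of the Lyapunov sequence $(a_n)$, which is immediate because $M_2\|x_n-p_n\|^2\geq 0$ and $f+h$ is assumed bounded from below; without the latter hypothesis Lemma \ref{fejer1} would not apply.
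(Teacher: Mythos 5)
Your proof is correct and follows exactly the paper's route: the paper likewise rewrites the inequality of Lemma \ref{lemma5} as \eqref{ineq-f}, applies Lemma \ref{fejer1} with the same Lyapunov quantity, and uses \eqref{x_n-x_n-1} for the second series in (i). Your write-up merely spells out the bookkeeping (nonnegativity of $M_2$, boundedness from below of $a_n$, and the deduction of (iii)) that the paper leaves implicit.
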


\begin{proof} From Lemma \ref{lemma5} we deduce that for every $n \geq 2$
\begin{equation}\label{ineq-f}(f+h)(p_n)+M_2\|x_n-p_n\|^2+(M_1-M_2)\|x_n-p_n\|^2\leq (f+h)(p_{n-1})+M_2\|x_{n-1}-p_{n-1}\|^2.\end{equation}
The conclusion follows from Lemma \ref{m1>m2}, Lemma \ref{fejer1} and relation \eqref{x_n-x_n-1}. 
\end{proof}

The following lemma provides an estimate for some elements in the limiting subdifferential. 

\begin{lemma}\label{subdiff} In the setting of Problem 1, consider the sequences generated by Algorithm 1. Then we have for every $n \geq 2$:
\begin{equation}\label{s-n-in-subdiff}s_n\in\partial (f+h)(p_n),\end{equation}
where 
\begin{align*}
s_n = & \frac{1}{\lambda_n} \big(\nabla u(x_n)-\nabla u(p_n) \big) + \nabla h(p_n)-\nabla h(x_n)+\frac{\alpha_n}{\lambda_n}(p_{n-1}-x_{n-1})\\
& + \frac{\alpha_n\lambda_{n-1}}{\lambda_n}\big(\nabla h(x_{n-1})-\nabla h(p_{n-1})\big).
\end{align*}
Moreover, 
\begin{equation}\label{ineq-s_n}\|s_n\|\leq \left(\frac{L_{\nabla u}}{\lambda_n}+L_{\nabla h}\right)\|x_n-p_n\|+
\frac{\alpha_n}{\lambda_n}(1+\lambda_{n-1}L_{\nabla h})\|x_{n-1}-p_{n-1}\| \ \forall n\geq 2.\end{equation}
\end{lemma}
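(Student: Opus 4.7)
The plan is to apply the Fermat rule to the subproblem that defines $p_n$, then rewrite the resulting inclusion using the forward step from the previous iteration, and finally add $\nabla h(p_n)$ to move from $\partial f$ to $\partial(f+h)$.

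First, since $p_n$ is a minimizer of the function
\[
x \mapsto f(x)+\frac{1}{\lambda_n}D_u(x,x_n)+\langle x,\nabla h(x_n)\rangle+\frac{\alpha_n}{\lambda_n}\langle x,x_{n-1}-x_n\rangle,
\]
and since the map $x\mapsto \frac{1}{\lambda_n}D_u(x,x_n)$ is continuously differentiable with gradient $\frac{1}{\lambda_n}(\nabla u(x)-\nabla u(x_n))$, the Fermat rule together with the subdifferential sum rule (for a proper l.s.c.\ function plus a $C^1$ function) yields
\[
0 \in \partial f(p_n) + \frac{1}{\lambda_n}\bigl(\nabla u(p_n)-\nabla u(x_n)\bigr) + \nabla h(x_n) + \frac{\alpha_n}{\lambda_n}(x_{n-1}-x_n).
\]
Rearranging, I get an explicit element of $\partial f(p_n)$.

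Second, I use the second line of \eqref{tseng-inertial-bregman-nonconv} shifted to index $n-1$, which gives $x_n = p_{n-1}+\lambda_{n-1}(\nabla h(x_{n-1})-\nabla h(p_{n-1}))$, hence
\[
x_{n-1}-x_n = -(p_{n-1}-x_{n-1}) - \lambda_{n-1}\bigl(\nabla h(x_{n-1})-\nabla h(p_{n-1})\bigr).
\]
Substituting this into the inclusion and adding $\nabla h(p_n)$ on both sides (using once more the sum rule $\partial(f+h)(p_n)=\partial f(p_n)+\nabla h(p_n)$), I obtain exactly the expression for $s_n$ claimed in \eqref{s-n-in-subdiff}.

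Finally, for the norm bound \eqref{ineq-s_n}, I apply the triangle inequality to $s_n$ and use the Lipschitz continuity of $\nabla u$ (constant $L_{\nabla u}$) to control $\|\nabla u(x_n)-\nabla u(p_n)\|$, together with the Lipschitz continuity of $\nabla h$ (constant $L_{\nabla h}$) to control both $\|\nabla h(p_n)-\nabla h(x_n)\|$ and $\|\nabla h(x_{n-1})-\nabla h(p_{n-1})\|$; grouping the resulting terms in $\|x_n-p_n\|$ and $\|x_{n-1}-p_{n-1}\|$ gives the stated estimate. There is no real obstacle here beyond careful bookkeeping of signs and indices; the only conceptual point worth underlining is the use of the second step of the scheme to eliminate $x_{n-1}-x_n$ in favour of quantities evaluated at iteration $n-1$, which is what makes the estimate \eqref{ineq-s_n} compatible with the summability properties established in Lemma \ref{x_n-p_n^2}.
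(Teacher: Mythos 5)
Your proof is correct and follows essentially the same route as the paper: Fermat's rule plus the sum rule for the subproblem defining $p_n$, the identity $x_{n-1}-x_n=x_{n-1}-p_{n-1}-\lambda_{n-1}(\nabla h(x_{n-1})-\nabla h(p_{n-1}))$ coming from the forward step at iteration $n-1$, and then the triangle inequality with the Lipschitz constants of $\nabla u$ and $\nabla h$ for the bound \eqref{ineq-s_n}. No gaps.
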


\begin{proof} Take $n\geq 2$. By using the formula for the subdifferential of the sum, from \eqref{tseng-inertial-bregman-nonconv} it follows that
$$0\in\partial f(p_n)+\frac{1}{\lambda_n} \big(\nabla u(p_n)-\nabla u(x_n) \big)+\nabla h(x_n)+\frac{\alpha_n}{\lambda_n}(x_{n-1}-x_n),$$
hence 
$$0\in\partial (f+h)(p_n)+\frac{1}{\lambda_n} \big(\nabla u(p_n)-\nabla u(x_n) \big)+\nabla h(x_n)-\nabla h(p_n)+\frac{\alpha_n}{\lambda_n}(x_{n-1}-x_n).$$
Relation \eqref{s-n-in-subdiff} follows from the above identity, by using also that
$$x_{n-1}-x_n=x_{n-1}-p_{n-1}-(x_n-p_{n-1})=x_{n-1}-p_{n-1}-\lambda_{n-1}\big(\nabla h(x_{n-1})-\nabla h(p_{n-1})\big).$$ 
The inequality \eqref{ineq-s_n} follows from the definition of the sequence $(s_n)_{n\geq 2}$. 
\end{proof}

In the following we use the notation $\omega ((p_n)_{n\geq 1})$ for the set of \textit{cluster points} of the sequence $(p_n)_{n\geq 1}$. Next we will give some properties of this set (see \cite{b-sab-teb}). 

\begin{lemma}\label{cluster-f+h} In the setting of Problem 1, suppose that the function $f+h$ is coercive (that is $\lim_{\|x\|\rightarrow +\infty}(f+h)(x)=+\infty$) and consider the sequences generated in Algorithm 1, where 
$\nu,\mu,\ul\lambda,\ol\lambda$ and $\alpha$ are chosen as in Lemma \ref{m1>m2}. Then the following statements are true: 
\begin{itemize}\item[(i)] $\emptyset\neq\omega ((p_n)_{n\geq 1})\subseteq \crit(f+h)$; 
\item[(ii)] $\lim_{n\rightarrow+\infty}\dist(p_n,\omega ((p_n)_{n\geq 1}))=0$;
\item[(iii)] $\omega ((p_n)_{n\geq 1})$ is a nonempty, compact and connected set; 
\item[(iv)] $f+h$ is finite and constant on $\omega ((p_n)_{n\geq 1})$.
\end{itemize}
\end{lemma}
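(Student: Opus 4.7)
The plan is to recover the four standard properties of the cluster set by combining the decrease information in Lemma \ref{x_n-p_n^2} with the subgradient estimate in Lemma \ref{subdiff} and a Fatou-type argument that exploits the very definition of $p_n$ as an argmin.

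First I would establish boundedness. Since $(f+h)(p_n)$ is convergent, hence bounded, and $f+h$ is coercive, the sequence $(p_n)_{n\geq 1}$ must be bounded, so $\omega((p_n)_{n\geq 1})\neq\emptyset$. From Lemma \ref{x_n-p_n^2}(i) I get $\|x_n-p_n\|\to 0$ and $\|x_{n+1}-x_n\|\to 0$, from which the triangle inequality $\|p_{n+1}-p_n\|\leq \|p_{n+1}-x_{n+1}\|+\|x_{n+1}-x_n\|+\|x_n-p_n\|$ yields $\|p_{n+1}-p_n\|\to 0$. Together with boundedness, this already gives the closedness, compactness and connectedness of $\omega((p_n)_{n\geq 1})$ by a standard argument (e.g.\ \cite{b-sab-teb}), establishing (iii), and also (ii) by an easy contradiction.

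The core of the argument is showing, for every subsequence $p_{n_k}\to p^*\in\omega((p_n)_{n\geq 1})$, that $(f+h)(p_{n_k})\to(f+h)(p^*)$. Since $\|x_n-p_n\|\to 0$, we also have $x_{n_k}\to p^*$ and $x_{n_k-1}\to p^*$. Testing the argmin definition of $p_{n_k}$ in \eqref{tseng-inertial-bregman-nonconv} against the point $x=p^*$ gives
\begin{align*}
 f(p_{n_k}) + \tfrac{1}{\lambda_{n_k}}D_u(p_{n_k},x_{n_k}) + \langle p_{n_k},\nabla h(x_{n_k})\rangle & + \tfrac{\alpha_{n_k}}{\lambda_{n_k}}\langle p_{n_k},x_{n_k-1}-x_{n_k}\rangle \\
 \leq f(p^*) + \tfrac{1}{\lambda_{n_k}}D_u(p^*,x_{n_k}) + \langle p^*,\nabla h(x_{n_k})\rangle & + \tfrac{\alpha_{n_k}}{\lambda_{n_k}}\langle p^*,x_{n_k-1}-x_{n_k}\rangle.
\end{align*}
Using \eqref{ineq-D} the term $D_u(p_{n_k},x_{n_k})\geq 0$ can be dropped on the left, while $D_u(p^*,x_{n_k})\to D_u(p^*,p^*)=0$ by continuity of $u$ and $\nabla u$. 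The bracket products converge to $\langle p^*,\nabla h(p^*)\rangle$ on both sides, the $\alpha_{n_k}/\lambda_{n_k}$ terms vanish because $x_{n_k-1}-x_{n_k}\to 0$ and $\lambda_{n_k}$ is bounded away from $0$. Taking $\limsup_{k\to\infty}$ I obtain $\limsup_{k\to\infty}f(p_{n_k})\leq f(p^*)$, which, combined with the lower semicontinuity of $f$, forces $f(p_{n_k})\to f(p^*)$. Since $h$ is continuous, $(f+h)(p_{n_k})\to(f+h)(p^*)$.

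With this continuity-along-the-sequence in hand, the remainder is routine. Lemma \ref{subdiff} provides $s_n\in\partial(f+h)(p_n)$ with $\|s_n\|\to 0$ (by \eqref{ineq-s_n}, using $\|x_n-p_n\|\to 0$ and $\|x_{n-1}-p_{n-1}\|\to 0$ plus the uniform bounds on $\lambda_n$ and $\alpha_n$). Applying the closedness criterion of the limiting subdifferential recalled in Section \ref{prel} to $p_{n_k}\to p^*$, $s_{n_k}\to 0$ and $(f+h)(p_{n_k})\to(f+h)(p^*)$ yields $0\in\partial(f+h)(p^*)$, proving (i). For (iv), let $\ell:=\lim_{n\to\infty}(f+h)(p_n)$, which exists by Lemma \ref{x_n-p_n^2}(iii); then for any $p^*\in\omega((p_n)_{n\geq 1})$ the established continuity along subsequences gives $(f+h)(p^*)=\ell$. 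The main obstacle is exactly the one addressed above: transporting the value of $f+h$ across the limit when $f$ is only lower semicontinuous, which is handled by testing the argmin against the prospective limit point.
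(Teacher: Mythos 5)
Your proposal is correct and follows essentially the same route as the paper's proof: boundedness of $(p_n)_{n\geq 1}$ from coercivity together with the decrease/convergence properties of Lemma \ref{x_n-p_n^2}, the key $\limsup$ estimate obtained by testing the argmin defining $p_{n_k}$ against the cluster point $p^*$, the closedness of the graph of the limiting subdifferential applied to $s_{n_k}\to 0$ from Lemma \ref{subdiff}, and the generic argument from \cite{b-sab-teb} for (ii)--(iii). The only (harmless) omission is that you should note explicitly that coercivity of the proper, lower semicontinuous function $f+h$ implies it is bounded from below, which is the hypothesis under which Lemma \ref{x_n-p_n^2} is invoked.
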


\begin{proof} Since $f+h$ is a proper, lower semicontinuous and coercive function, it follows that $\inf_{x\in\R^m}[f(x)+h(x)]$ is finite and the infimum is attained 
(see \cite{rock-wets}). Hence $f+h$ is bounded from below. 

(i) According to Lemma \ref{x_n-p_n^2}(ii), we have $$(f+h)(p_n)\leq (f+h)(p_n)+M_2\|x_n-p_n\|^2\leq (f+h)(p_1)+M_2\|x_1-p_1\|^2 \ \forall n\geq 1.$$
Since the function $f+h$ is coercive, its lower level sets are bounded and we conclude that $(p_n)_{n\geq 1}$ is bounded, hence
$\omega ((p_n)_{n\geq 1})\neq\emptyset$. 

Take an arbitrary $p^*\in \omega ((p_n)_{n\geq 1})$. There exists a subsequence $(p_{n_k})_{k\in\N}$ such that $p_{n_k}\rightarrow p^*$ as 
$k\rightarrow +\infty$. We show in the following that $\lim_{k\rightarrow+\infty}f(p_{n_k})=f(p^*)$. Notice that the lower semicontinuity of the 
function $f$ ensures $\liminf_{k\rightarrow+\infty}f(p_{n_k})\geq f(p^*)$. Moreover, from \eqref{tseng-inertial-bregman-nonconv} we have that for every $n \geq 1$ 
\begin{align*}
 & f(p_n)+\frac{1}{\lambda_n}D_u(p_n,x_n)+\<p_n,\nabla h(x_n)\>+\frac{\alpha_n}{\lambda_n}\<p_n,x_{n-1}-x_n\> \\
\leq &  f(p^*)+\frac{1}{\lambda_n}D_u(p^*,x_n)+\<p^*,\nabla h(x_n)\>+\frac{\alpha_n}{\lambda_n}\<p^*,x_{n-1}-x_n\>.
\end{align*}
By using Lemma \ref{x_n-p_n^2}(i), \eqref{ineq-D} and by taking into consideration the bounds of the sequences involved, it follows
$\limsup_{k\rightarrow+\infty}f(p_{n_k})\leq f(p^*)$, hence $\lim_{k\rightarrow+\infty}f(p_{n_k})=f(p^*)$.

Further, using Lemma \ref{subdiff}, we have $s_{n_k}\in\partial (f+h)(p_{n_k})$ for all $k \geq 2$. Further, by using \eqref{ineq-s_n} and Lemma \ref{x_n-p_n^2}(i),
from $p_{n_k}\rightarrow p^*$ it follows that $s_{n_k}\rightarrow 0$ as $k\rightarrow +\infty$. Since we additionally have that $\lim_{k\rightarrow+\infty}(f+h)(p_{n_k})=(f+h)(p^*)$, 
the closedness of the graph of the limiting subdifferential operator guarantees that $0\in\partial (f+h)(p^*)$, thus $p^*\in\crit(f+h)$. 

The proof of (ii) and (iii) can be done in the lines of \cite[Lemma 5]{b-sab-teb}, by also taking into consideration \cite[Remark 5]{b-sab-teb}, where it is noticed 
that the properties (ii) and (iii) are generic for sequences satisfying $p_{n+1}-p_n\rightarrow 0$ as $n\rightarrow+\infty$.

(iv) By Lemma \ref{x_n-p_n^2}(iii), $((f+h)(p_n))_{n\geq 1}$ is a convergent sequence. Let us denote by $l\in\R$ its limit. 
Take an arbitrary $p^*\in \omega ((p_n)_{n\geq 1})$. There exists a subsequence $(p_{n_k})_{k\in\N}$ such that $p_{n_k}\rightarrow p^*$ as 
$k\rightarrow +\infty$. As shown at item (i), one has that $\lim_{k\rightarrow+\infty}(f+h)(p_{n_k})=(f+h)(p^*)$. On the other hand, 
$\lim_{k\rightarrow+\infty}(f+h)(p_{n_k})=l$, hence $(f+h)(p^*)=l$. Thus the restriction of $f+h$ to $\omega ((p_n)_{n\geq 1})$ equals $l$. 
\end{proof}

The following result characterizes the set of cluster points of the sequence $(p_n, x_n)_{n\geq 1}$.

\begin{lemma}\label{cluster-H} In the setting of Problem 1, suppose that the function $f+h$ is coercive, consider the sequences generated in Algorithm 1, where 
$\nu,\mu,\ul\lambda,\ol\lambda$ and $\alpha$ are chosen as in Lemma \ref{m1>m2}, and the constants $M_1$ and $M_2$ as in Lemma \ref{lemma5}. We introduce the function $H:\R^m\times\R^m\rightarrow \B$ defined by 
\begin{equation}\label{def-H}H(x,y)=(f+h)(x)+M_2\|x-y\|^2 \ \forall (x,y)\in\R^m\times\R^m.\end{equation}
Then the following statements are true: 
\begin{itemize}\item[(i)] $\emptyset\neq\omega ((p_n,x_n)_{n\geq 1})\subseteq \crit(H)=\{(x,x)\in\R^m\times\R^m:x\in\crit(f+h)\}$; 
\item[(ii)] $\lim_{n\rightarrow+\infty}\dist((p_n,x_n),\omega ((p_n,x_n)_{n\geq 1}))=0$;
\item[(iii)] $\omega ((p_n,x_n)_{n\geq 1})$ is a nonempty, compact and connected set; 
\item[(iv)] $H$ is finite and constant on $\omega ((p_n,x_n)_{n\geq 1})$.
\end{itemize}
\end{lemma}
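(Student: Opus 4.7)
The plan is to reduce everything to the corresponding facts for $(p_n)_{n\ge 1}$ established in Lemma \ref{cluster-f+h}, by exploiting the fact that $x_n - p_n \to 0$. Indeed, from Lemma \ref{x_n-p_n^2}(i) we have $\sum_{n\ge 1}\|x_n-p_n\|^2<+\infty$, which immediately yields $\|x_n-p_n\|\to 0$ as $n\to+\infty$. Combined with the boundedness of $(p_n)_{n\ge 1}$ (which was already obtained in the proof of Lemma \ref{cluster-f+h}(i) from coercivity of $f+h$ together with Lemma \ref{x_n-p_n^2}(ii)), this gives that $(x_n)_{n\ge 1}$ is bounded too. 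Consequently $(p_n,x_n)_{n\ge 1}$ is bounded, so $\omega((p_n,x_n)_{n\ge 1})\neq\emptyset$.

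For (i), observe that any cluster point $(p^*,x^*)$ of $(p_n,x_n)_{n\ge 1}$ must satisfy $p^*=x^*$ because $\|x_n-p_n\|\to 0$, and that $p^*\in\omega((p_n)_{n\ge 1})\subseteq\crit(f+h)$ by Lemma \ref{cluster-f+h}(i). To identify $\crit(H)$, I would apply the subdifferential sum rule (nonsmooth $+$ smooth) stated in Section \ref{prel} to $H(x,y)=(f+h)(x)+M_2\|x-y\|^2$, viewing the penalty as a $C^1$ function of $(x,y)$ with gradient $(2M_2(x-y),-2M_2(x-y))$. This gives
$$\partial H(x,y)=\bigl(\partial(f+h)(x)+2M_2(x-y)\bigr)\times\{-2M_2(x-y)\},$$
so $(0,0)\in\partial H(x,y)$ iff $x=y$ and $0\in\partial(f+h)(x)$, proving the claimed identity $\crit(H)=\{(x,x):x\in\crit(f+h)\}$, and hence (i).

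For (ii), I would argue by contradiction: if $\dist((p_{n_k},x_{n_k}),\omega((p_n,x_n)_{n\ge 1}))\ge\varepsilon$ along some subsequence, then by boundedness a further subsequence converges to some point which by definition lies in $\omega((p_n,x_n)_{n\ge 1})$, a contradiction. For (iii), nonemptiness and compactness are immediate from boundedness and the fact that $\omega((p_n,x_n)_{n\ge 1})$ is the intersection of the closures of the tails, while connectedness follows from the generic Ostrowski-type argument recalled in \cite[Remark 5]{b-sab-teb}; the required vanishing $\|(p_{n+1},x_{n+1})-(p_n,x_n)\|\to 0$ reduces to showing $\|p_{n+1}-p_n\|\to 0$ and $\|x_{n+1}-x_n\|\to 0$. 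The latter comes from \eqref{x_n-x_n-1} combined with $\|x_n-p_n\|\to 0$, and the former from the triangle inequality via $\|p_{n+1}-p_n\|\le\|p_{n+1}-x_{n+1}\|+\|x_{n+1}-p_n\|$ together with \eqref{x_n-p_n-1}.

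For (iv), by Lemma \ref{x_n-p_n^2}(ii) the sequence $H(p_n,x_n)=(f+h)(p_n)+M_2\|p_n-x_n\|^2$ converges to some $l\in\R$. For any $(p^*,p^*)\in\omega((p_n,x_n)_{n\ge 1})$, pick a subsequence $(p_{n_k},x_{n_k})\to(p^*,p^*)$; the argument already used in the proof of Lemma \ref{cluster-f+h}(i) (which uses the minimality property of $p_{n_k}$ together with $\|x_{n_k}-p_{n_k-1}\|\to 0$ and $\|x_{n_k}-x_{n_k-1}\|\to 0$) gives $(f+h)(p_{n_k})\to(f+h)(p^*)$, hence $H(p_{n_k},x_{n_k})\to(f+h)(p^*)=H(p^*,p^*)$, and therefore $H(p^*,p^*)=l$. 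The main point to watch out for is the subdifferential computation in (i): one must appeal to the paper's sum rule for a sum of a proper lower semicontinuous function and a $C^1$ function, applied on the product space, so that the nonconvexity of $f+h$ causes no trouble.
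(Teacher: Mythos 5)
Your proposal is correct and follows essentially the same route as the paper's (very terse) proof: everything is reduced to Lemma \ref{cluster-f+h} via $\|x_n-p_n\|\to 0$, using the decrease inequality \eqref{ineq-H} and the product-space subdifferential formula $\partial H(x,y)=\bigl(\partial(f+h)(x)+2M_2(x-y)\bigr)\times\{2M_2(y-x)\}$, which is exactly what the paper records. Your sign for the second component, $-2M_2(x-y)$, agrees with the paper's $2M_2(y-x)$, and the remaining details (boundedness, the Ostrowski-type argument for (ii)--(iii), and the convergence of $H(p_n,x_n)$ for (iv)) match the intended argument.
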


\begin{proof} The proof is similar to the one of Lemma \ref{cluster-f+h} by noticing that for every $n \geq 2$ (see \eqref{ineq-f})
\begin{equation}\label{ineq-H} H(p_n,x_n)+(M_1-M_2)\|x_n-p_n\|^2\leq H(p_{n-1},x_{n-1}) 
\end{equation}
and 
\begin{equation}\label{s-n+M2}(s_n+2M_2(p_n-x_n),2M_2(x_n-p_n))\in\partial H(p_n,x_n),\end{equation}
where $(s_n)_{n \geq 2}$ is the sequence introduced in Lemma \ref{subdiff}. 
Relation \eqref{s-n+M2} follows from
$$\partial H(x,y)=\big(\partial (f+h)(x)+2M_2(x-y)\big)\times \{2M_2(y-x)\} \ \forall (x,y)\in\R^m\times\R^m.$$
\end{proof}

We are now in position to prove the convergence of the Tseng's type algorithm provided that $H$ is a KL function.

\begin{theorem}\label{th-H} In the setting of Problem 1, suppose that the function $f+h$ is coercive, consider the sequences generated in Algorithm 1, where 
$\nu,\mu,\ul\lambda,\ol\lambda$ and $\alpha$ are chosen as in Lemma \ref{m1>m2}, and the constants $M_1$ and $M_2$ as in Lemma \ref{lemma5}. We assume that
\begin{equation*}
H:\R^m\times\R^m\rightarrow \B, \ H(x,y)=(f+h)(x)+M_2\|x-y\|^2 \ \forall (x,y)\in\R^m\times\R^m,
\end{equation*}
is a KL function. Then the following statements are true:\begin{itemize}
 \item[(i)] $\sum_{n\geq 1}\|x_n-p_n\|<+\infty$ and $\sum_{n\in\N}\|x_{n+1}-x_n\|<+\infty$;
 \item[(ii)] there exists $x\in\crit(f+h)$ such that $\lim_{n\rightarrow+\infty}x_n=\lim_{n\rightarrow+\infty}p_n=x$.
\end{itemize}
\end{theorem}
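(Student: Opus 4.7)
The plan is to follow the standard Kurdyka--\L{}ojasiewicz template used in \cite{b-sab-teb, att-b-sv2013}, applied to the regularized function $H$ on the space $\R^m\times\R^m$, since Lemma \ref{cluster-H} already supplies the three canonical ingredients: a sufficient decrease inequality for $H(p_n,x_n)$, an element of $\partial H(p_n,x_n)$ whose norm is controlled by the last two gap quantities $\|x_n-p_n\|$ and $\|x_{n-1}-p_{n-1}\|$, and the fact that $H$ is finite and constant on the cluster set $\Omega:=\omega((p_n,x_n)_{n\geq 1})$. The target of the analysis is the summability $\sum_{n\geq 1}\|x_n-p_n\|<+\infty$ in item (i); everything else then follows quickly.

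First I would denote $H^*$ the common value of $H$ on $\Omega$. Lemma \ref{x_n-p_n^2}(ii)--(iii) combined with Lemma \ref{x_n-p_n^2}(i) implies $H(p_n,x_n)\downarrow H^*$, and the descent inequality \eqref{ineq-H} gives $H(p_n,x_n)\geq H^*$ for every $n$. I would split into two cases. If $H(p_{n_0},x_{n_0})=H^*$ for some $n_0$, then monotonicity forces $H(p_n,x_n)=H^*$ for all $n\geq n_0$; \eqref{ineq-H} then yields $x_n=p_n$ for $n>n_0$, and the Tseng update in \eqref{tseng-inertial-bregman-nonconv} makes the sequence stationary, so (i)--(ii) are trivial. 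Otherwise $H(p_n,x_n)>H^*$ for every $n$.

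In the nontrivial case I would invoke the uniformized KL property (Lemma \ref{unif-KL-property}) for the compact set $\Omega$: there exist $\varepsilon,\eta>0$ and $\varphi\in\Theta_\eta$ such that the KL inequality $\varphi'(H(p_n,x_n)-H^*)\,\dist(0,\partial H(p_n,x_n))\geq 1$ holds for all $n$ large enough, since by Lemma \ref{cluster-H}(ii) $\dist((p_n,x_n),\Omega)\to 0$ and $H(p_n,x_n)-H^*\in(0,\eta)$ eventually. Using the subgradient bound \eqref{s-n+M2} together with \eqref{ineq-s_n}, the quantity $\dist(0,\partial H(p_n,x_n))$ is majorized by a constant multiple of $\|x_n-p_n\|+\|x_{n-1}-p_{n-1}\|$. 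Combining the KL inequality with the concavity of $\varphi$, which yields
\begin{equation*}
\varphi(H(p_{n-1},x_{n-1})-H^*)-\varphi(H(p_n,x_n)-H^*)\geq \varphi'(H(p_n,x_n)-H^*)\bigl(H(p_{n-1},x_{n-1})-H(p_n,x_n)\bigr),
\end{equation*}
and with the descent \eqref{ineq-H}, I obtain an inequality of the form
\begin{equation*}
(M_1-M_2)\|x_n-p_n\|^2\leq K\bigl(\|x_n-p_n\|+\|x_{n-1}-p_{n-1}\|\bigr)\bigl(\Delta_{n-1}-\Delta_n\bigr),
\end{equation*}
where $\Delta_n:=\varphi(H(p_n,x_n)-H^*)$ and $K>0$ depends only on the data. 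Applying Young's inequality $2\sqrt{ab}\leq \gamma a+\gamma^{-1}b$ with $\gamma$ chosen so that the resulting coefficients $a,b$ in front of $\|x_{n-1}-p_{n-1}\|$ and $\|x_{n-2}-p_{n-2}\|$ satisfy $a+b<1$ produces a recursion $\|x_n-p_n\|\leq a\|x_{n-1}-p_{n-1}\|+b\|x_{n-2}-p_{n-2}\|+\varepsilon_n$ with $\sum_n\varepsilon_n<+\infty$ by telescoping of $(\Delta_n)$.

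With this recursion in hand, Lemma \ref{fejer2} yields $\sum_{n\geq 1}\|x_n-p_n\|<+\infty$. The bound \eqref{x_n-x_n-1} then immediately gives $\sum_{n\in\N}\|x_{n+1}-x_n\|<+\infty$, proving (i). Consequently $(x_n)_{n\geq 1}$ is Cauchy and converges to some $x\in\R^m$; since $\|x_n-p_n\|\to 0$, also $p_n\to x$, and Lemma \ref{cluster-f+h}(i) identifies $x$ as a critical point of $f+h$, establishing (ii). The main delicate point I expect is the calibration of the Young's inequality constant $\gamma$: it must be chosen small enough to absorb the subgradient bound into $a+b<1$ while still keeping the residual $\varepsilon_n$ summable, which is exactly why Lemma \ref{fejer2} was stated in the general form $a+b<1$ rather than the more classical $a<1$.
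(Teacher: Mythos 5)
Your proposal follows exactly the paper's own route: the two-case split on whether $H(p_n,x_n)$ reaches its limit value in finite time, the uniformized KL property of Lemma \ref{unif-KL-property} applied to $\Omega=\omega((p_n,x_n)_{n\geq 1})$, the combination of the descent inequality \eqref{ineq-H}, the subgradient estimate \eqref{s-n+M2} together with \eqref{ineq-s_n}, and the concavity of $\varphi$ to produce a three-term recursion, and finally Lemma \ref{fejer2} (your closing remark about why that lemma is stated with $a+b<1$ rather than $a<1$ is exactly the point). One displayed step is, however, stated with the wrong orientation: for a concave $\varphi$ and $A\geq B$ one has $\varphi(A)-\varphi(B)\geq \varphi'(A)(A-B)$, i.e.\ the derivative must be evaluated at the \emph{larger} argument $H(p_{n-1},x_{n-1})-H^*$, whereas with $\varphi'$ evaluated at the smaller argument $H(p_n,x_n)-H^*$, as in your display, the inequality reverses and the chain breaks. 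The repair forces you to invoke the KL inequality at the point $(p_{n-1},x_{n-1})$, so the factor multiplying $\Delta_{n-1}-\Delta_n$ in your intermediate estimate should be $K\bigl(\|x_{n-1}-p_{n-1}\|+\|x_{n-2}-p_{n-2}\|\bigr)$ rather than $K\bigl(\|x_n-p_n\|+\|x_{n-1}-p_{n-1}\|\bigr)$. This is a one-index adjustment, and the recursion $\|x_n-p_n\|\leq a\|x_{n-1}-p_{n-1}\|+b\|x_{n-2}-p_{n-2}\|+\varepsilon_n$ that you actually feed into Lemma \ref{fejer2} is already the one produced by the corrected chain; the rest of your argument (Young's inequality with a parameter small enough to ensure $a+b<1$, telescoping of $\varphi$, then \eqref{x_n-x_n-1} and the Cauchy argument for (ii)) stands as written and coincides with the paper's proof.
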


\begin{proof} (i) According to Lemma \ref{cluster-H} (i) we can consider an element $p^*\in\crit(f+h)$ such that $(p^*, p^*) \in \omega ((p_n,x_n)_{n\geq 1})$. In analogy to the proof of Lemma \ref{cluster-f+h} one can
easily show that $\lim_{n\rightarrow +\infty}H(p_n,x_n)=H(p^*,p^*)$. We consider two cases. 

I. There exists $\ol n\in\N$ such that $H(p_{\ol n},x_{\ol n})=H(p^*,p^*)$. The decrease property in \eqref{ineq-H} implies 
$H(p_{n},x_{n})=H(p^*,p^*)$ for every $n\geq \ol n$. One can show inductively that the sequence $(p_n,x_n)_{n\geq \ol n}$ is constant and the 
conclusion follows. 

II. For all $n\geq 1$ we have $H(p_n,x_n)>H(p^*,p^*)$. Take $\Omega:=\omega ((p_n,x_n)_{n\geq 1})$. Since $H$ is a KL function, 
from Lemma \ref{cluster-H}(iii)-(iv) and Lemma \ref{unif-KL-property}, there exist $\varepsilon,\eta >0$ and $\varphi\in \Theta_{\eta}$ such that 
for all $(x,y)$ in the intersection 
\begin{align}\label{int-H} 
& \{(x,y)\in\R^m\times\R^m: \dist((x,y),\Omega)<\varepsilon\} \nonumber \\ 
\cap & \{(x,y)\in\R^m\times\R^m: H(p^*,p^*)<H(x,y)<H(p^*,p^*)+\eta\}\end{align} 
the following inequality holds 
\begin{equation}\label{KL-ineq-H}\varphi'(H(x,y)-H(p^*,p^*))\dist((0,0),\partial H(x,y))\geq 1.\end{equation}
Let be $n_1\geq 1$ such that $H(p_n,x_n)<H(p^*,p^*)+\eta$ for every $n\geq n_1$. 
Moreover, from Lemma \ref{cluster-H}(ii), there exists $n_2\in\N$ such that $\dist((p_n,x_n),\Omega)<\varepsilon$ for every $n\geq n_2$.
Thus the sequence $(p_n,x_n)_{n\geq N}$ belongs to the intersection in \eqref{int-H}, where $N=\max\{n_1,n_2\}$. From \eqref{KL-ineq-H}, 
we have \begin{equation}\label{KL-ineq-H-pn-xn}\varphi'(H(p_n,x_n)-H(p^*,p^*))\dist((0,0),\partial H(p_n,x_n))\geq 1 \ \forall n\geq N.\end{equation}
Further, since $\varphi$ is a concave function, we get for every $n \geq 1$ the following inequality: 
\begin{align}\label{concave} \varphi\Big(H(p_n,x_n)-H(p^*,p^*)\Big)-\varphi\Big(H(p_{n+1},x_{n+1})-H(p^*,p^*)\Big) & \geq \nonumber \\
\varphi'\Big(H(p_n,x_n)-H(p^*,p^*)\Big)\cdot\Big(H(p_n,x_n)-H(p_{n+1},x_{n+1})\Big). & \end{align}
Moreover, from \eqref{KL-ineq-H-pn-xn} and \eqref{s-n+M2} we have 
\begin{equation}\label{varphi'} \varphi'\Big(H(p_n,x_n)-H(p^*,p^*)\Big)\geq \frac{1}{\|(s_n+2M_2(p_n-x_n),2M_2(x_n-p_n))\|} \ \forall n\geq N.\end{equation}

By using for every $n \geq 1$ the notation
$$\Delta_{n,n+1}:=\varphi\big(H(p_n,x_n)-H(p^*,p^*)\big)-\varphi\big(H(p_{n+1},x_{n+1})-H(p^*,p^*)\big),$$ from 
\eqref{concave}, \eqref{varphi'} and \eqref{ineq-H} we deduce 
\begin{equation}\label{d-n-n+1} \Delta_{n,n+1}\geq (M_1-M_2)\cdot\frac{\|x_{n+1}-p_{n+1}\|^2}{\sqrt{\|s_n+2M_2(p_n-x_n)\|^2+4M_2^2\|x_n-p_n\|^2}} \ \forall n\geq N.
\end{equation}
From here  we obtain
\begin{equation}\label{arithm-mean}\|x_{n+1}-p_{n+1}\|\leq \frac{\delta}{2}\sqrt{\|s_n+2M_2(p_n-x_n)\|^2+4M_2^2\|x_n-p_n\|^2}+\frac{\Delta_{n,n+1}}{2\delta(M_1-M_2)} \ \forall n\geq N,\end{equation}
where $\delta>0$ is chosen such that the following inequality holds: 
\begin{equation}\label{delta} 
\frac{\delta\sqrt{2}}{2}\left(\sqrt{\left(\frac{L_{\nabla u}}{\ul\lambda}+L_{\nabla h}+2M_2\right)^2+4M_2^2}+\frac{\alpha}{\ul\lambda}\left(1+\ol\lambda L_{\nabla h}\right)\right)<1.
\end{equation}
Moreover, we have for every $n \geq 1$ (see \eqref{ineq-s_n})
\begin{align*}
& \sqrt{\|s_n+2M_2(p_n-x_n)\|^2+4M_2^2\|x_n-p_n\|^2}\\
\leq & \sqrt{\left[2\left(\frac{L_{\nabla u}}{\lambda_n}+L_{\nabla h}+2M_2\right)^2+4M_2^2\right]\|x_n-p_n\|^2
+2\frac{\alpha_n^2}{\lambda_n^2}\Big(1+\lambda_{n-1}L_{\nabla h}\Big)^2\|x_{n-1}-p_{n-1}\|^2}\\
\leq & \sqrt{\left[2\left(\frac{L_{\nabla u}}{\lambda_n}+L_{\nabla h}+2M_2\right)^2+4M_2^2\right]}\|x_n-p_n\|+
\sqrt{2}\frac{\alpha_n}{\lambda_n}\Big(1+\lambda_{n-1}L_{\nabla h}\Big)\|x_{n-1}-p_{n-1}\|\\
\leq & \sqrt{\left[2\left(\frac{L_{\nabla u}}{\ul\lambda}+L_{\nabla h}+2M_2\right)^2+4M_2^2\right]}\|x_n-p_n\|+
\sqrt{2}\frac{\alpha}{\ul\lambda}\Big(1+\ol\lambda L_{\nabla h}\Big)\|x_{n-1}-p_{n-1}\|.
\end{align*}

We derive from \eqref{arithm-mean} that 
\begin{equation}\label{n+1,n,n-1}\|x_{n+1}-p_{n+1}\|\leq a\|x_n-p_n\|+b\|x_{n-1}-p_{n-1}\|+\frac{\Delta_{n,n+1}}{2\delta(M_1-M_2)} \ \forall n\geq N,\end{equation}
where 
$$a:=\frac{\delta\sqrt{2}}{2}\left(\sqrt{\left(\frac{L_{\nabla u}}{\ul\lambda}+L_{\nabla h}+2M_2\right)^2+4M_2^2}\right) \ \mbox{and} \
b:=\frac{\delta\sqrt{2}}{2}\frac{\alpha}{\ul\lambda}\left(1+\ol\lambda L_{\nabla h}\right).$$ Notice that due to \eqref{delta} we have 
$a+b<1$. Now, for a fixed $k\geq 1$ we have (since $\varphi$ takes only non-negative values)
\begin{align*} \sum_{n=1}^k\Delta_{n,n+1} \ = &\ \varphi\big(H(p_1,x_1)-H(p^*,p^*)\big)-\varphi\big(H(p_{k+1},x_{k+1})-H(p^*,p^*)\big)&\\
\leq \ &\varphi\big(H(p_1,x_1)-H(p^*,p^*)\big),&\end{align*}
hence $$\sum_{n\geq 1}\frac{\Delta_{n,n+1}}{2\delta(M_1-M_2)}<+\infty.$$
From \eqref{n+1,n,n-1} and Lemma \ref{fejer2} we conclude that $\sum_{n\geq 1}\|x_n-p_n\|<+\infty$. Further, from \eqref{x_n-x_n-1} we obtain 
$\sum_{n\in\N}\|x_{n+1}-x_n\|<+\infty$. 

(ii) It follows from (i) that $(x_n)_{n\in\N}$ is a Cauchy sequence, hence it is convergent. Since $x_n-p_n\rightarrow 0$ (as $n\rightarrow+\infty$), the 
conclusion follows from Lemma \ref{cluster-H}(i). 
\end{proof}

\begin{remark}\label{ipiano} A similar condition to the one imposed in the previous theorem on the function $H$ has been used in \cite{ipiano}, for an appropriate choice of the parameter $M_2$, in order to prove the convergence 
of an inertial forward-backward algorithm for solving the problem \eqref{opt-pb} in case $f$ is a convex function.
\end{remark}

The following corollary is a direct consequence of Theorem \ref{th-H}.

\begin{corollary}\label{cor-f+h} In the setting of Problem 1, suppose that the function $f+h$ is coercive and semi-algebraic, consider the sequences generated in Algorithm 1, where 
$\nu,\mu,\ul\lambda,\ol\lambda$ and $\alpha$ are chosen as in Lemma \ref{m1>m2}. Then the following statements are true:
\begin{itemize}
 \item[(i)] $\sum_{n\geq 1}\|x_n-p_n\|<+\infty$ and $\sum_{n\in\N}\|x_{n+1}-x_n\|<+\infty$;
 \item[(ii)] there exists $x\in\crit(f+h)$ such that $\lim_{n\rightarrow+\infty}x_n=\lim_{n\rightarrow+\infty}p_n=x$.
\end{itemize} 
\end{corollary}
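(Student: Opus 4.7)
The plan is to reduce Corollary \ref{cor-f+h} to Theorem \ref{th-H} by showing that the coercive plus semi-algebraic hypothesis implies the KL hypothesis on $H$ required by the theorem. Once this is done, parts (i) and (ii) are exactly the conclusions of Theorem \ref{th-H} applied in the present setting.

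First I would recall that the class of semi-algebraic functions is closed under the operations that build $H$ from $f+h$. Concretely, the projection $\pi:\R^m\times\R^m\rightarrow\R^m$, $\pi(x,y)=x$, is semi-algebraic (being polynomial), and the composition of a semi-algebraic function with a semi-algebraic map is semi-algebraic; this gives that $(x,y)\mapsto (f+h)(x)$ is semi-algebraic on $\R^m\times\R^m$. Next, $(x,y)\mapsto M_2\|x-y\|^2$ is a polynomial, hence semi-algebraic. Finally, the sum of two semi-algebraic functions is semi-algebraic. Hence $H:\R^m\times\R^m\rightarrow\B$ defined in \eqref{def-H} is semi-algebraic.

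The second step is to invoke the well-known fact (recalled in Section \ref{prel} and, for instance, in \cite{b-sab-teb, att-b-red-soub2010}) that every proper lower semicontinuous semi-algebraic function satisfies the Kurdyka-\L{}ojasiewicz property at each point of $\dom\partial H$, i.e.\ $H$ is a KL function. Since $f+h$ is proper and lower semicontinuous (as the sum of a proper lower semicontinuous $f$ and a continuous $h$) and $M_2\|\cdot-\cdot\|^2$ is continuous, $H$ is proper and lower semicontinuous on $\R^m\times\R^m$, so the KL conclusion applies.

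With $H$ shown to be a KL function, all the hypotheses of Theorem \ref{th-H} are met: Problem 1 is in force, $f+h$ is coercive by assumption, the parameters $\nu,\mu,\ul\lambda,\ol\lambda,\alpha$ are chosen as in Lemma \ref{m1>m2}, and $M_1,M_2$ are as in Lemma \ref{lemma5}. Applying Theorem \ref{th-H} directly yields the summability statements in (i) and the existence of $x\in\crit(f+h)$ with $\lim_{n\to+\infty}x_n=\lim_{n\to+\infty}p_n=x$ in (ii). There is no real obstacle here; the only delicate point is the semi-algebraicity bookkeeping for $H$, which is routine once one recalls that semi-algebraicity is preserved by projections, compositions, and finite sums.
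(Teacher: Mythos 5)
Your proposal is correct and follows essentially the same route as the paper: show that $H(x,y)=(f+h)(x)+M_2\|x-y\|^2$ is semi-algebraic (hence KL) using stability of semi-algebraic functions under sums, then apply Theorem \ref{th-H}. The extra detail you supply about the projection $(x,y)\mapsto x$ and the semi-algebraic-implies-KL step is just a more explicit version of what the paper leaves implicit.
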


\begin{proof} The function $(x,y) \mapsto M_2\|x-y\|^2$ is semi-algebraic, where $M_2$ is considered as in  Lemma \ref{lemma5}. Since 
the class of semi-algebraic functions is stable under finite sums (see \cite{b-sab-teb}), it follows that $H:\R^m\times\R^m\rightarrow \B, \ H(x,y)=(f+h)(x)+M_2\|x-y\|^2$ is
semi-algebraic as well. The conclusion follows from Theorem \ref{th-H}.
\end{proof}


\begin{thebibliography}{99}

\bibitem{alvarez2000} F. Alvarez, {\it On the minimizing property of a second order dissipative system in Hilbert spaces}, SIAM Journal
on Control and Optimization 38(4), 1102--1119, 2000

\bibitem{alvarez2004} F. Alvarez, {\it Weak convergence of a relaxed and inertial hybrid projection-proximal point algorithm for
maximal monotone operators in Hilbert space}, SIAM Journal on Optimization 14(3), 773--782, 2004

\bibitem{alvarez-attouch2001} F. Alvarez, H. Attouch, {\it An inertial proximal method for maximal monotone operators via discretization
of a nonlinear oscillator with damping}, Set-Valued Analysis 9, 3--11, 2001

\bibitem{attouch-bolte2009} H. Attouch, J. Bolte, {\it On the convergence of the proximal algorithm for nonsmooth functions involving analytic
features}, Mathematical Programming 116(1-2) Series B, 5--16, 2009

\bibitem{att-b-red-soub2010} H. Attouch, J. Bolte, P. Redont, A. Soubeyran, {\it Proximal alternating minimization and projection
methods for nonconvex problems: an approach based on the Kurdyka-\L{}ojasiewicz inequality}, Mathematics of Operations Research 
35(2), 438--457, 2010

\bibitem{att-b-sv2013} H. Attouch, J. Bolte, B.F. Svaiter, {\it Convergence of descent methods for semi-algebraic and tame problems: 
proximal algorithms, forward-backward splitting, and regularized Gauss-Seidel methods}, Mathematical Programming 137(1-2) Series A, 91--129, 2013

\bibitem{att-peyp-red} H. Attouch, J. Peypouquet, P. Redont, {\it A dynamical approach to an inertial forward-backward algorithm 
for convex minimization}, SIAM Journal on Optimization 24(1), 232--256, 2014

\bibitem{bauschke-book} H.H. Bauschke P.L. Combettes, {\it Convex Analysis and Monotone Operator Theory in Hilbert Spaces}, CMS Books in Mathematics, Springer, New York, 2011

\bibitem{BecTeb09}
A. Beck and M. Teboulle, {\it A fast iterative shrinkage-thresholding algorithm for linear inverse problems}, SIAM Journal of Imaging Sciences 2(1), 183--202, 2009

\bibitem{b-d-l2006} J. Bolte, A. Daniilidis, A. Lewis, {\it The \L{}ojasiewicz inequality for nonsmooth subanalytic functions with applications 
to subgradient dynamical systems}, SIAM Journal on Optimization 17(4), 1205--1223, 2006

\bibitem{b-d-l-m2010} J. Bolte, A. Daniilidis, O. Ley, L. Mazet, {\it Characterizations of \L{}ojasiewicz inequalities:
subgradient flows, talweg, convexity}, Transactions of the American Mathematical Society 362(6), 3319--3363, 2010

\bibitem{b-sab-teb} J. Bolte, S. Sabach, M. Teboulle, {\it Proximal alternating linearized minimization 
for nonconvex and nonsmooth problems}, Mathematical Programming Series A, DOI 10.1007/s10107-013-0701-9

\bibitem{b-c-inertial} R.I. Bo\c t, E.R. Csetnek, {\it An inertial forward-backward-forward primal-dual splitting algorithm for solving monotone 
inclusion problems}, arXiv:1402.5291, 2014

\bibitem{b-c-inertial-admm} R.I. Bo\c t, E.R. Csetnek, {\it An inertial alternating direction method of multipliers}, to appear in 
Minimax Theory and its Applications, arXiv:1404.4582, 2014

\bibitem{b-c-h-inertial} R.I. Bo\c t, E.R. Csetnek, C. Hendrich, {\it Inertial Douglas-Rachford splitting for monotone inclusion problems}, 
arXiv:1403.3330v2, 2014

\bibitem{br-combettes} L.M. Brice\~{n}o-Arias, P.L. Combettes, {\it
A monotone + skew splitting model for composite monotone inclusions in duality},
SIAM Journal on Optimization 21(4), 1230--1250, 2011

\bibitem{cabot-frankel2011} A. Cabot, P. Frankel, {\it Asymptotics for some proximal-like method involving inertia and
memory aspects}, Set-Valued and Variational Analysis 19, 59--74, 2011

\bibitem{c-pesquet-r} E. Chouzenoux, J.-C. Pesquet, A. Repetti, {\it Variable metric forward-backward algorithm for minimizing the sum of a 
differentiable function and a convex function}, Journal of Optimization Theory and its Applications, DOI 10.1007/s10957-013-0465-7

\bibitem{f-g-peyp} P. Frankel, G. Garrigos, J. Peypouquet, {\it Splitting methods with variable metric for KL functions}, arXiv:1405.1357, 2014

\bibitem{kurdyka1998} K. Kurdyka, {\it On gradients of functions definable in o-minimal structures}, 
Annales de l'institut Fourier (Grenoble) 48(3), 769--783, 1998

\bibitem{lojasiewicz1963} S. \L{}ojasiewicz, {\it Une propri\'{e}t\'{e} topologique des sous-ensembles analytiques r\'{e}els}, 
Les \'{E}quations aux D\'{e}riv\'{e}es Partielles, \'{E}ditions du Centre National de la Recherche Scientifique Paris, 87--89, 1963 

\bibitem{mainge2008} P.-E. Maing\'{e}, {\it Convergence theorems for inertial KM-type algorithms}, Journal of Computational
and Applied Mathematics 219, 223--236, 2008

\bibitem{mainge-moudafi2008} P.-E. Maing\'{e}, A. Moudafi, {\it Convergence of new inertial proximal methods for dc programming},
SIAM Journal on Optimization 19(1), 397--413, 2008

\bibitem{boris-carte} B. Mordukhovich, {\it Variational Analysis and Generalized Differentiation, I: Basic Theory, II: Applications}, 
Springer-Verlag, Berlin, 2006.

\bibitem{moudafi-oliny2003} A. Moudafi, M. Oliny, {\it Convergence of a splitting inertial proximal method for monotone
operators}, Journal of Computational and Applied Mathematics 155, 447--454, 2003

\bibitem{nes} Y. Nesterov, {\it Introductory Lectures on Convex Optimization: A Basic Course}, Kluwer Academic Publishers, Dordrecht, 2004

\bibitem{ipiano} P. Ochs, Y. Chen, T. Brox, T. Pock, {\it iPiano: Inertial proximal algorithm for non-convex optimization}, arXiv:1404.4505, 2014

\bibitem{pesq-pust} J.-C. Pesquet, N. Pustelnik, {\it A parallel inertial proximal optimization method},  
Pacific Journal of Optimization 8(2), 273--306, 2012

\bibitem{rock-wets} R.T. Rockafellar, R.J.-B. Wets, {\it Variational Analysis}, Fundamental Principles of Mathematical Sciences 317, 
Springer-Verlag, Berlin, 1998

\bibitem{Tse91} P. Tseng. {\it Applications of a splitting algorithm to decomposition in convex programming and variational inequalities}, 
SIAM Journal on Control and Optimization  29(1), 119--138, 1991

\bibitem{tseng} P. Tseng, {\it A modified forward-backward splitting method for maximal monotone mappings}, 
SIAM Journal on Control and Optimization 38(2), 431--446, 2000

\end{thebibliography}
\end{document}